\newtheorem{thm}{Theorem}[section]
\newtheorem{lem}[thm]{Lemma}
\newtheorem{cor}[thm]{Corollary}
\theoremstyle{definition}
\newtheorem{remk}{Remark}[section]
\newcommand{\M}{\mathbb M}
\newcommand{\MM}{\mathbb M^{2\times 2}}
\newcommand{\R}{\mathbb R}
\newcommand{\dist}{\operatorname{dist}}
\newcommand{\K}{\mathcal K}
\newcommand{\tr}{\operatorname{tr}}
\newcommand{\rad}{\operatorname{rad}}
\newcommand{\dv}{\operatorname{div}}
\numberwithin{equation}{section}
\begin{document}

 \author{Baisheng Yan}
 \address{Department of Mathematics\\ Michigan State University\\ East Lansing, MI 48824, USA}
   \email{yanb@msu.edu}

\title[Uniqueness and regularity for polyconvex gradient flows]{On  nonuniqueness and   nonregularity for gradient flows of polyconvex functionals}

\subjclass[2010]{Primary 35K40, 35K51, 35D30. Secondary  35F50, 49A20}
\keywords{Gradient flow;  polyconvex functional;   nonuniqueness; nonregularity; partial differential relation; convex integration}  

\begin{abstract}
We provide some counterexamples  concerning   the uniqueness and  regularity of weak solutions  to the initial-boundary value problem for  gradient flows  of certain strongly polyconvex functionals by showing  that such a problem can possess a trivial classical solution as well as infinitely many  weak solutions that are nowhere smooth. Such polyconvex functions have been constructed in the previous work, and  the nonuniqueness and nonregularity will be achieved by reformulating the gradient flow as a   partial differential relation  and then using the convex integration method to construct certain strongly convergent sequences of subsolutions that have a uniform control on the local essential oscillations of their spatial gradients.
  \end{abstract}

\maketitle

\section{Introduction}

Let  $m,n\ge 2$ be integers,  $\M^{m\times n}$ be the usual Euclidean space of $m\times n$ matrices with  inner product $A:B=\tr(A^TB),$ and $F\colon \M^{m\times n}\to \R$ be a given continuous function. We consider  the energy functional
 \begin{equation}\label{energy}
 \mathcal E(u)=\int_\Omega F(Du(x))\,dx  
 \end{equation}
for  $u=(u^1,\cdots, u^m)\colon\Omega\to \R^m,$ where $\Omega$ is a bounded domain in $\R^n$ and   $Du$  is the Jacobian matrix of $u$  in $\M^{m\times n}$ defined  by $Du =({\partial u^i}/{\partial x_k}) \, (1\le i\le m,\, 1\le k\le n).$ 

 Assume $F$ is $C^1$ and define $DF\colon \M^{m\times n}\to \M^{m\times n}$ by setting  $DF(A)=\big( {\partial F(A)}/{\partial A^i_k} \big)$ for all $A\in \M^{m\times n}.$ Given a number $T>0$, the {\em $L^2$-gradient flow} of energy functional $\mathcal E$ on $\Omega_T=\Omega\times (0,T)$ is defined by  
\begin{equation}\label{GF}
\partial_t u=\dv DF(Du),
\end{equation}
where  $\partial_t u$ and $Du$ denote the time-derivative and spatial Jacobian matrix of $u=u(x,t)\colon\Omega_T\to\R^m.$ 

Smooth solutions of the gradient flow (\ref{GF}) with time-independent  boundary conditions $u(x,t)=g(x)$ on $\partial\Omega\times (0,T)$  will obey the energy decay identity
\[
\frac{d}{dt}\mathcal E(u(\cdot,t))=-\int_\Omega |\partial_t u|^2 dx
\]
and thus will always decrease the energy. In this regard,  gradient flows  become  important as well as an ideal time-dependent evolution model  in the study of energy minimization both theoretically and in applications; see, e.g.,  {\sc  Ambrosio, Gigli \&  Savar\'e} \cite{AGS}. As for general systems of  partial differential equations,  the existence of  smooth solutions of  gradient flows may not be  always  available and thus the study of certain solutions in some weaker sense becomes necessary, in hoping that such solutions can still retain some features of the smooth solutions. A function $u\colon \Omega_T\to  \R^m$ in a usual Sobolev space is called   {\em   a weak solution} of (\ref{GF}) provided  that
\begin{equation}\label{weak-sol}
\int_\Omega  (u  \cdot \varphi) \big |_{t=0}^{t=T} \, dx   =\int_{\Omega_T} \Big (u \cdot \partial_t \varphi  - DF(Du) : D\varphi \Big ) dxdt 
\end{equation}
holds for all $\varphi\in C^1([0,T]; C^\infty_c(\Omega;\R^m)),$ where $u(\cdot,0)$ and $u(\cdot,T)$ are taken in the sense of trace.
However, some unexpected counterexamples to the features enjoyed by smooth solutions may occur for  weak solutions.  The goal of this paper is to provide such  counterexamples for the gradient flows  of  certain  energy functionals for which the energy minimization problems have traditionally been  well-studied; see, e.g.,  {\sc Ball} \cite{Ba} and {\sc Dacoronga} \cite{D}. 

 In this paper, we study the gradient flow  (\ref{GF}) for certain smooth {\em strongly polyconvex} functions  $F\colon \MM\to \R$  in the sense that for some $\nu>0$ 
\begin{equation}\label{poly}
F(A)=\frac{\nu}{2}|A|^2+G(A,\det A),
\end{equation}
 where $G\colon \MM\times \R\to \R$  is convex. Our main  concern is about  the uniqueness and regularity of  weak solutions to the initial-boundary value problem
\begin{equation}\label{ibvp}
\begin{cases}  \partial_t u=\dv DF(Du )   \;\; \mbox{in $\Omega_T$;}
\\
 u  |_{\partial' \Omega_T}=0,
\end{cases}
\end{equation}
 where $\partial'\Omega_T=(\Omega\times \{0\})\cup (\partial \Omega \times (0,T))$ denotes  the parabolic boundary of $\Omega_T.$ Clearly, the energy decay  identity shows that $u\equiv 0$ is the  unique smooth solution to (\ref{ibvp}), but for weak solutions, this may not be the case;  we establish the following main result  concerning the nonuniqueness and  nonregularity  of (\ref{ibvp}).

\begin{thm}\label{mainthm}   There exist  smooth strongly polyconvex functions $F$ of the form (\ref{poly}) on $\MM$  with the property that for all $\phi\in C^1_0(\Omega;\R^2)$ there is $\epsilon_0>0$ such that for all  $|\epsilon|<\epsilon_0$  the initial-boundary value problem (\ref{ibvp})  possesses infinitely many  Lipschitz weak solutions  $u$ on $\Omega_T$ satisfying that $u(x,T)=\epsilon \phi(x)$ and that $Du$ is nowhere  continuous in $\Omega_T.$ 
  \end{thm}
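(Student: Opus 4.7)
My plan is to recast (\ref{ibvp}) as a partial differential relation on an augmented space-time field and then apply convex integration, starting from a small seed that already realizes the prescribed terminal datum $u(x,T)=\epsilon\phi(x).$ First, introduce an auxiliary stress matrix $V\colon\Omega_T\to\MM$ so that (\ref{GF}) decouples into the linear space-time conservation law $\partial_t u-\dv V=0$ together with the pointwise nonlinear constraint $V=DF(Du).$ The divergence-type constraint can then be absorbed by passing to a space-time potential $w$: the rows of the matrix $[V\mid -u]\in\M^{2\times (n+1)}$ are divergence-free on $\Omega_T,$ so on the simply connected cylinder $\Omega_T$ one can write $(V,u)$ as a space-time differential of some $w.$ The problem becomes a first-order differential inclusion $D_{x,t}U(x,t)\in K$ a.e.\ in $\Omega_T,$ where $U=(u,w)$ is the enlarged map and $K$ is the graph-type target set built from $DF.$

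For the heart of the argument I would invoke a smooth strongly polyconvex $F$ of the form (\ref{poly}) produced in the earlier work referenced in the abstract; the key property to be extracted is that the associated target $K$ admits an \emph{in-approximation} $U_1\subset U_2\subset\cdots$ by open sets, each contained in the rank-one (or appropriate $\mathcal{A}$-convex) hull of the next, with $\bigcap_j\overline{U_j}\subset K.$ Given $\phi\in C^1_0(\Omega;\R^2),$ I would build a seed subsolution by taking a smooth $\bar u$ with $\bar u|_{\partial\Omega\times[0,T]}=0,$ $\bar u(x,0)=0,$ $\bar u(x,T)=\epsilon\phi(x)$ (e.g.\ $\bar u(x,t)=\chi(t)\epsilon\phi(x)$ for a suitable cutoff $\chi$), and then solve the linear divergence equation for a compatible $\bar V$ with zero lateral trace. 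For $|\epsilon|<\epsilon_0$ the seed state $(D\bar u,\bar V)$ lies inside the innermost $U_1,$ and the convex integration iteration proceeds by adding compactly supported, high-frequency laminates that push the state into $U_{j+1}$ while remaining in $U_j;$ the compact support automatically preserves the lateral, initial, and terminal data of the seed throughout.

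I would then equip the admissible subsolutions with a complete metric (uniform convergence of $U$) and invoke the Baire-category framework of Kirchheim--M\"uller--\v{S}ver\'ak to identify a residual subset of exact solutions (those with $D_{x,t}U\in K$ a.e.), producing infinitely many Lipschitz weak solutions of (\ref{ibvp}) with $u(x,T)=\epsilon\phi(x).$ To upgrade this to \emph{nowhere}-continuity of $Du,$ the oscillation lemma driving each lamination step must be sharpened so that every iterate adds a gradient perturbation of uniform positive amplitude $\delta_0$ on every space-time ball; the limiting solution then satisfies $\mathrm{ess\,osc}_B\,Du\ge\delta_0$ on every $B\subset\Omega_T,$ ruling out continuity at every point. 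I expect the principal obstacle to lie in the interplay between exact preservation of the terminal datum $u(\cdot,T)=\epsilon\phi$ and the insertion of oscillating laminates arbitrarily close to the slice $t=T$: compactly supported perturbations must be admitted in a shrinking collar of $\{t=T\}$ while still producing oscillations of uniform size in $Du,$ which in turn forces a careful quantitative choice of the in-approximation $\{U_j\}$ inherited from the polyconvex $F$ of the earlier work.
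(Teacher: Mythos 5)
Your high-level framing is close to the paper's: reformulate the gradient flow as a partial differential relation whose target is the graph of $DF$, use the special strongly polyconvex $F$ from \cite{Ya1}, build a seed subsolution that is zero on $\partial'\Omega_T$ and equals $\epsilon\phi$ at $t=T$, and run convex integration. Your seed is essentially the paper's choice $\bar u(x,t)=\tfrac{\epsilon}{T}\phi(x)t$ (the paper takes the compatible potential $\bar v=\tfrac{\epsilon}{T}h(x)t$ with $h=\mathcal R^J\phi$ rather than solving the divergence equation, which is a cosmetic difference). Your algebraic handle on the linear constraint via a space-time potential $w$ for the rows of $[V\mid -u]$ differs from the paper's simple antidivergence operator $\mathcal R$ on rectangles, but either can be made to work.

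The genuine gap is in the convex integration engine. You propose to obtain the solutions by the Baire category method and then, separately, sharpen the single-step oscillation lemma to force uniform-amplitude perturbations. But these two ingredients do not compose: the Baire category argument identifies solutions as a residual subset of a closure of subsolutions and gives you no handle on the sequence converging to any particular residual element, so a per-step amplitude bound does not propagate to a lower bound on $\operatorname{ess\,osc}_B Du$ for the limit. The paper is explicit that ``the Baire category method does not seem to produce the nowhere-$C^1$ weak solutions,'' and this is precisely why. What the paper does instead is bypass Baire category entirely and construct a concrete Cauchy sequence $(u_n,v_n)$ in $W^{1,1}$ by iterating the convex integration step (Theorem~\ref{thm1}), which is built so that on each cube $Q^n_j$ of the $n$-th partition, the gradient $(Du_{n+1},\partial_t v_{n+1})$ spends a definite fraction of measure in \emph{each} of the five pairwise-disjoint sheets $S_k(\lambda_{n+1})$; Lemma~\ref{lem512} then shows this positive fraction persists for the limit in every cube $Q^m_j$, and since $\operatorname{dist}(\overline{\mathbb P(S_k(1))},\overline{\mathbb P(S_l(1))})=\delta>0$ for $k\ne l$, one gets $\omega_{Du}(y_0)\ge\delta$ at every point. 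The ``infinitely many'' in Theorem~\ref{mainthm} then comes for free from Theorem~\ref{mainthm1}, because the construction produces solutions in \emph{every} $L^\infty$-neighborhood of $\bar u$. If you want a proof along the lines of your proposal, you would need to replace the residuality argument with this kind of explicit iteration, and you would need the quantitative multi-sheet volume bookkeeping (not just a uniform amplitude bound on a single perturbation) to certify that all five separated sheets are hit on every small cube; a single-scale oscillation lower bound alone would not rule out, say, oscillation within one connected branch that shrinks under iteration.

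There is also a smaller quantitative mismatch to flag: you identify the obstacle as preserving the terminal data while inserting laminates near $t=T$. In the paper's setting this is benign precisely because Theorem~\ref{thm1} produces perturbations that are compactly supported in the \emph{full} space-time domain (via the Vitali cube decomposition of $\Omega_T$), and the boundary condition in Theorem~\ref{mainthm1} is imposed on the entire $\partial\Omega_T$, so the terminal slice is never touched. The real delicacy is instead the volume-fraction bookkeeping in parts (d)--(f) of Theorem~\ref{thm1} and the openness condition~(\ref{open}), which is why $\lambda_1$ is chosen above $\max\{\bar\lambda,\delta_1,\nu_1\}$ so that Property (P3)(c) applies for all subsequent steps.
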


The polyconvex functions $F$ in the theorem will be  the same as constructed in {\sc Yan} \cite{Ya1}, and the theorem will be proved as a corollary  of a more general result  proved later (see Theorem \ref{mainthm1}).  Some aspects of the theorem in regard to certain known results will be discussed below,  along with  the main strategies of the proof.

In general, when studying the energy functional  (\ref{energy}) or a general diffusion system  of the form
\begin{equation}\label{sys0}
\partial_t u=\dv \sigma(Du) 
\end{equation}
with diffusion-flux  function $\sigma\colon \M^{m\times n}\to \M^{m\times n}$  given and continuous, one often imposes certain convexity or monotonicity conditions on the function  $F$ or  $\sigma;$ see  \cite{AF, Ba,  BDM13, CZ92, D, Ev, Fu87, Gi, Ha95, La96, Mo, Zh86}.  Under certain such conditions, some {\em partial regularity} results have been well-known. For example, it has been proved by {\sc Evans} \cite{Ev}  that all Lipschitz  minimizers of  $\mathcal E$ are $C^{1,\alpha}$ on some  open subset of $\Omega$ of full measure provided that  the function $F$ is  {\em strongly quasiconvex} in the sense that for some $\nu>0$ 
\begin{equation}\label{qx}
 \int_\Omega (F(A+D\phi)-F(A))dx \ge  \frac{\nu}{2} \int_\Omega |D\phi|^2 dx 
  \end{equation}
  holds for all $A\in\M^{m\times n}$ and $\phi\in C_c^\infty(\Omega;\R^m).$  Concerning the diffusion system (\ref{sys0}),  under some reasonable smoothness and growth conditions on $\sigma$, it has been  proved by {\sc  B\"ogelein,  Duzaar \& Mingione} \cite{BDM13} that any weak solution  $u$  of  (\ref{sys0}) satisfies  $Du\in C_{loc}^{\alpha,\alpha/2}(Q_0)$ for some $\alpha\in (0,1)$ on some  open subset $Q_0$ of $\Omega_T$ of full measure provided that the function $\sigma$ is  {\em strongly quasimonotone} in the sense that for some $\nu>0$  \begin{equation}\label{qm}
\int_\Omega  \sigma (A+D\phi) : D\phi \, dx \ge \nu\int_\Omega |D\phi|^2 dx 
\end{equation}
  holds for all $A\in\M^{m\times n}$ and $\phi\in C_c^\infty(\Omega;\R^m).$ Note that by approximation both (\ref{qx}) and (\ref{qm}) will hold for all $\phi\in W^{1,\infty}_0(\Omega;\R^m)$ if they hold for all $\phi\in C_c^\infty(\Omega;\R^m).$

It is easily seen that a  function of the form (\ref{poly}) always satisfies  (\ref{qx}) and that  a  function  $F$  will satisfy (\ref{qx}) if $\sigma=DF$ satisfies (\ref{qm}). The result of \cite{BDM13}  shows that $\sigma=DF$ cannot be  strongly quasimonotone  for any   function  $F$  in  the theorem.   In general, unlike polyconvexity implying quasiconvexity,  it seems impossible to characterize an easily-recognizable class of  functions $F$ such that  $\sigma=DF$ is (strongly) quasimonotone. 
As a consequence of  (\ref{qx}) and the nonregularity, we also have the following result.
\begin{cor}
Let  $u$ be any weak solution to the problem (\ref{ibvp}) as in Theorem \ref{mainthm}. Then there exists an open dense subset $I$ of  $(0,T)$ such that  
\begin{equation}\label{energy-0}
\mathcal E(u(\cdot,t))-\mathcal E(u(\cdot,0))\ge  \frac{\nu}{2} \int_\Omega |Du(x,t)|^2 dx>0 \quad \forall \, t\in I;
\end{equation}
 thus the  energy does not decrease  along the solution. 
 \end{cor}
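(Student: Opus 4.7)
The plan is to derive both parts of \eqref{energy-0} almost for free from the strong quasiconvexity inequality \eqref{qx}, which (as noted in the paragraph following \eqref{qm}) is automatically satisfied by every polyconvex $F$ of the form \eqref{poly} and, by approximation, may be tested against any $\phi\in W^{1,\infty}_0(\Omega;\R^2)$. The nontrivial content of the corollary is really just to locate the dense open set of times $t$ on which the right-hand side is strictly positive, and this is where the nowhere-continuity of $Du$ enters.

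First, I would apply \eqref{qx} with $A=0$ and $\phi=u(\cdot,t)$. Since $u$ is Lipschitz on $\Omega_T$ with $u|_{\partial\Omega\times(0,T)}=0$, each time slice $u(\cdot,t)$ lies in $W^{1,\infty}_0(\Omega;\R^2)$ and is therefore admissible. The initial condition $u(\cdot,0)\equiv 0$ (taken in the classical sense, using the continuous representative of $u$) gives $\mathcal E(u(\cdot,0))=F(0)\meas(\Omega)$, and \eqref{qx} turns into exactly the weak inequality
\[
\mathcal E(u(\cdot,t))-\mathcal E(u(\cdot,0))\ge\frac{\nu}{2}\int_\Omega|Du(x,t)|^2\,dx\qquad\forall\,t\in(0,T).
\]

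Next, set $I:=\{t\in(0,T):u(\cdot,t)\not\equiv 0\text{ on }\Omega\}$ and $J:=(0,T)\setminus I$. Since $u$ has a continuous representative on $\overline{\Omega_T}$, the map $t\mapsto u(\cdot,t)$ is continuous (say into $C(\overline\Omega)$), so $J$ is closed and $I$ is open. For every $t\in I$, the Poincar\'e inequality applied to $u(\cdot,t)\in W^{1,\infty}_0(\Omega;\R^2)$ forces $\int_\Omega|Du(x,t)|^2\,dx>0$, which upgrades the previous inequality to the strict statement in \eqref{energy-0}.

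The final and main step is to prove that $I$ is dense, i.e.\ that $J$ has empty interior; this is the place where the nonregularity conclusion of Theorem \ref{mainthm} is genuinely used. Suppose for contradiction that some open subinterval $(a,b)$ were contained in $J$. Then $u\equiv 0$ on the open set $\Omega\times(a,b)\subset\Omega_T$, so $Du$ agrees almost everywhere on this open set with the continuous (zero) matrix field, contradicting the fact that $Du$ is nowhere continuous in $\Omega_T$. Thus $J$ has empty interior and $I$ is a dense open subset of $(0,T)$, completing the argument. The only delicate point is the interpretation of ``nowhere continuous'' for the essentially defined object $Du$; but in context this must mean that $Du$ admits no continuous representative on any open subset of $\Omega_T$, which is precisely what the contradiction above invokes.
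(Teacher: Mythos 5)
Your proposal is correct and follows essentially the same approach as the paper: apply the strong quasiconvexity inequality \eqref{qx} with $A=0$, $\phi=u(\cdot,t)$, identify $I$ as the complement of the closed set of times where the slice vanishes, and use the nowhere-continuity of $Du$ to rule out an interval where $u\equiv 0$. You merely spell out the supporting facts (continuity of $t\mapsto u(\cdot,t)$ for closedness, Poincar\'e for strict positivity) that the paper leaves implicit.
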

 \begin{proof} The set
 $C=\{t\in (0,T) \,|\, u(\cdot,t)|_{\Omega}=0\}$ is closed in $(0,T)$ and has no interior; otherwise, $u\equiv 0$ and thus $Du$ is continuous on an open subset of $\Omega_T.$  So $I=(0,T)\setminus C$ is open and dense in $(0,T)$, and $\|Du(\cdot,t)\|_{L^2(\Omega)}>0$ for all $t\in I.$ Thus (\ref{energy-0}) follows from the strong quasiconvexity (\ref{qx}) with $A=0$ and $\phi(x)=u(x,t).$
\end{proof}

It has been well-known that partial regularity for the weak solutions of gradient flow  (\ref{GF}) fails under the condition (\ref{qx}) or even (\ref{poly}). Indeed,   examples of   strongly quasiconvex functions $F$ on $\M^{2\times 2}$ have been constructed in  {\sc  M\"uller \& \v Sver\'ak} \cite{MSv2}   such   that the system (\ref{sys0}) with $\sigma=DF$ possesses {\em stationary} (time-independent) Lipschitz weak solutions that are not $C^1$ on any open subset of $\Omega_T.$ A subsequent  result in   {\sc  Sz\'ekelyhidi} \cite{Sz1} has shown that such functions $F$ can be constructed  of the form (\ref{poly}). However, such stationary solutions cannot produce  a counterexample for the initial-boundary value problem (\ref{ibvp}).  Nevertheless, for time-dependent problems, with  any function  $F$ constructed in \cite{MSv2, Sz1},  the result of {\sc  M\"uller,  Rieger \&  \v Sver\'ak} \cite{MRS} shows that for all $\epsilon>0$ and $\alpha\in(0,1)$  there exists  a function $f\in C^\alpha(\Omega_T;\R^2)$ with $\|f\|_{C^\alpha(\Omega_T)}<\epsilon$ such that the initial-boundary value problem:
\begin{equation}\label{ibvp-0}
\begin{cases}  \partial_t u -\dv DF(Du )=f   \;\; \mbox{in $\Omega_T$;}
\\
u|_{\partial'\Omega_T}=0,
\end{cases}
\end{equation}
 has a Lipschitz weak  solution  $u$ with  $Du$ nowhere  continuous in $\Omega_T.$  Their method relies on constructing a parametrized family of weak solutions $u(\cdot;t)$ (with time $t$ being the parameter)   of the stationary system $\dv DF(Du)=0$ on $\Omega$ with  $Du$ nowhere  continuous;  in the meantime, their construction ensures that $\|\partial_t u\|_{C^\alpha(\Omega_T)}<\epsilon;$ finally, their example is provided by taking $f=\partial_t u.$     
 Our result  provides a much simpler counterexample of nonuniqueness and non-partial-regularity for  (\ref{ibvp-0}) with $f=0.$ 
 
A general approach for system (\ref{sys0}) has been set up in {\sc Kim \& Yan} \cite{KY1,KY2,KY3} for some forward-backward diffusion equations for  $n\ge 2$ and $m=1$ and in {\sc Yan} \cite{Ya1} for $n\ge 2$ and $m\ge 1$, following the early work of {\sc Zhang} \cite{Zh1,Zh2} for $m=n=1.$ To discuss the main idea of \cite{Ya1},  we consider functions $u\colon\Omega_T\to\R^m$ and $v\colon\Omega_T\to \M^{m\times n}$ satisfying $u=\dv v$ almost everywhere in $\Omega_T$, where $\dv v$  is the divergence  vector in $ \R^m$  with $(\dv v)^i= \sum_{k=1}^n\partial v^i_k/\partial x_k$ for $i=1,\dots,m.$ The system (\ref{sys0}) can be reformulated under the general framework of {\sc Gromov} \cite{Gr86}  as a  {\em partial differential relation}:
 \begin{equation}\label{pdr}
u= \dv v,\quad (Du,\partial_t v)\in \K_\sigma,
 \end{equation}
where $\K_\sigma=\{(A,\sigma(A))\,|\, A\in \M^{m\times n}\}$ is the graph of $\sigma.$ 
The method of {\em convex integration} for general partial differential relations  (see \cite{Gr86,KMS,MSv,MSv2})   motivates  some assumptions and structures on  the set $\K_\sigma.$ Under such conditions, some dense families of the so-called {\em subsolutions}  of (\ref{pdr}) have been constructed and with them  the existence of true weak solutions  has been established in \cite{Ya1} by  a  Baire  category method (see  \cite{D,DM1}); however, the Baire category method  does not seem to produce  the nowhere-$C^1$ weak solutions. 

In this paper, we achieve the nonuniqueness and nonregularity of problem (\ref{ibvp}) by following the approach  of \cite{MRS,MSv2, Sz1} to construct using subsolutions of  the  relation (\ref{pdr}) with $\sigma=DF$  some Cauchy sequences  $\{u_j\}$  in $W^{1,1}(\Omega_T;\R^2)$ satisfying 
\begin{equation}\label{approx}
u_j=\dv v_j,\quad \|DF(Du_j)-\partial_tv_j  \|_{L^1(\Omega_T)}\to 0
\end{equation}
such that there is a uniform control  on the local essential oscillations of  $\{Du_j\}.$  To accomplish such construction,  the  special features of the polyconvex functions $F$ constructed in \cite{Ya1} need to be greatly explored,  along with some careful and subtle  construction  for  convex integration (see Theorem \ref{thm1});  as in \cite{KY1,KY2,KY3,Ya1}, the linear constraint $u=\dv v$ is handled by a simple antidivergence operator as constructed  in \cite{KY1}  (see Section \ref{s3}).  A more general {\em $h$-principle} type of result   in the sense of \cite{Gr86} is  proved in  Theorem \ref{mainthm1}, from which Theorem \ref{mainthm} follows easily as a corollary.  

Finally, we  remark in passing that the convex integration method  has  recently found remarkable success  in the study of many important partial differential equation problems; see, e.g.,  \cite{BDS,BV,CFG,DS,Is,LP, Sh}.

\section{The polyconvex functions and associated properties}
Smooth polyconvex functions  $F\colon \MM\to \R$ of the form (\ref{poly}) have been constructed in \cite[Section  6]{Ya1} based on a {\em special  class} of  $T_5$-configurations  \cite{KMS,Ta93} in $\MM\times \MM.$ We point out that the strongly polyconvex functions  constructed in \cite{Sz1} are based on regular $T_5$-configurations in $\MM\times \MM$ and thus may not satisfy the special features of $F$ required here.

We assume that $F\colon \MM\to \R$ is any smooth polyconvex function of the form (\ref{poly}) constructed in \cite[Definition 6.1]{Ya1} and denote   the graph of $DF\colon \MM\to\MM$  by \[
\K=\{(A,DF(A))\,|\, A\in \MM\}\subset  \MM\times \MM.
\]
Rather than discussing   the details of the construction, we summarize some properties about $F$ that are crucial for  the proof of the main theorem by convex integration of the partial differential relation
 \begin{equation}\label{pdr1}
u= \dv v,\quad (Du,\partial_t v)\in \K.
 \end{equation}

From the construction, there exist  smooth  functions defined on a closed ball $\bar B_\beta=\bar B_\beta(0) \subset \R^8$,
\begin{equation}
\begin{cases} 
\lambda_i\colon \bar B_\beta  \to (0,1),\\
 \xi_i\colon \bar B_\beta   \to \K,\\
\pi_i\colon \bar B_\beta \to \MM\times \MM,\end{cases} \quad (i=1,\cdots,5)
\end{equation}
satisfying  the following properties:
\begin{itemize}
\item[\bf (P1)] Each $\pi_i$ is invertible and thus $\pi_i(B_{\beta})$ is open for $1\le i\le 5.$ Also $\pi_1(0)=(0,0)\in \MM\times \MM.$ Let $\mathbb P\colon \MM\times \MM\to\MM$ be the projection operator: $\mathbb P(A,B)=A.$ Then
\[
 \mathbb P\xi_i(\bar B_\beta)\cap  \mathbb P\xi_j(\bar B_\beta)= \mathbb P\pi_i(\bar B_\beta)\cap  \mathbb P\pi_j(\bar B_\beta)=\emptyset \quad \forall\, i\ne j.
 \]
\item[\bf (P2)] For  all $1\le i\le 5$ and $q\in \bar B_\beta$,
\[
\pi_{i+1}(q)=\lambda_i(q)\xi_i(q)+(1-\lambda_i(q))\pi_i(q),
\]
where we set $\pi_6(q)=\pi_1(q).$ There exist numbers $0<\nu_0<\nu_1<1$ such that
\[
\nu_0 < \lambda_i(q)<\nu_1 \quad \forall\, 1\le i\le 5,\; q\in \bar B_\beta.
\]

 \item[\bf (P3)] For $0\le \lambda\le 1$ and $1\le i\le 5,$ define the sets in $\MM\times\MM$:
\[
\begin{cases}
S_i(\lambda)   =\{\lambda \xi_i(q) +(1-\lambda)\pi_i(q): q\in B_{\beta}\}, \\
K(\lambda)  =  \cup_{i=1}^5 S_i(\lambda),  \quad \Sigma(0)=K(0), \\
 \Sigma(\lambda) = \cup_{0\le \lambda'<\lambda} K(\lambda') \quad\forall\, 0<\lambda\le 1.
 \end{cases}
\]
Then the following properties hold:
\begin{itemize}
\item[(a)] $S_i(0)=\pi_i(B_\beta)$ is open for each $1\le i\le 5$ and the family $\{S_i(0)\}_{i=1}^5$ is disjoint   (see Property (P1));
\item[(b)]  $\Sigma(\lambda)$ is open for all  $0\le \lambda \le 1$ (see \cite[Proof of Theorem 6.11]{Ya1});
\item[(c)] there is a number  $0<\delta_1<1$ such that
 $\{S_i(\lambda)\}_{i=1}^5 $ is a family of disjoint  open sets   for all  $\delta_1\le \lambda<1 $ (see \cite[Remark 6.3]{Ya1}).
 \end{itemize}
\end{itemize} 
(See Figure \ref{fig0} for a typical special $T_5$-configuration of points $ X_i=\xi_i(q)$ and $ P_i=\pi_i(q) $   in $\MM\times \MM$ for $1\le i\le 5$ with a given $q\in \bar B_\beta.$)

\begin{figure}[ht]
\begin{center}
\begin{tikzpicture}[scale =0.8]
\draw[thick] (-5,-2)--(1,-1);
  \draw(-5,-2) node[above]{$X_1$};
   \draw(1,-1) node[right]{$P_1$};
     \draw[thick] (1,-1)--(1,3);
  \draw[thick] (1,-1)--(1,-3);
  \draw[thick] (-2,-1.5)--(-5,1);
   \draw[thick] (-6.5,2.25)--(-5,1);
   \draw[thick] (-5,1)--(-2,4);
     \draw[thick] (0,6)--(-2,4);
    \draw[thick] (-2,4)--(1,3);
       \draw[thick] (1,3)--(4,2);
     \draw(1,-3) node[right]{$X_5$};
      \draw(1,3) node[above]{$P_5$};
        \draw(-5,1) node[left]{$P_3$};
         \draw(-2,-1.5) node[below]{$P_2$};
               \draw(-2.4,3.8) node[above]{$P_4$};
    \draw(4,2) node[above]{$X_{4}$};
   \draw(-6.5,2.25) node[above]{$X_2$};
     \draw(0,6) node[above]{$X_3$};
       \draw (-5,-2) circle (0.09);
        \draw(4,2) circle (0.09);
         \draw(1,-3) circle (0.09);
          \draw(-6.5,2.25) circle (0.09);
           \draw(0,6) circle (0.09);
\end{tikzpicture}
\end{center}
\caption{The points $ X_i=\xi_i(q) $ and $ P_i=\pi_i(q) $   in $\MM\times \MM$ for $1\le i\le 5$ with a given $q\in \bar B_\beta.$ 
Here $P_{i+1} =\lambda_i X_i +(1-\lambda_i)P_i$ for $1\le i\le 5$ with $P_6=P_1$ and $\lambda_i=\lambda_i(q).$} 
\label{fig0}
\end{figure}
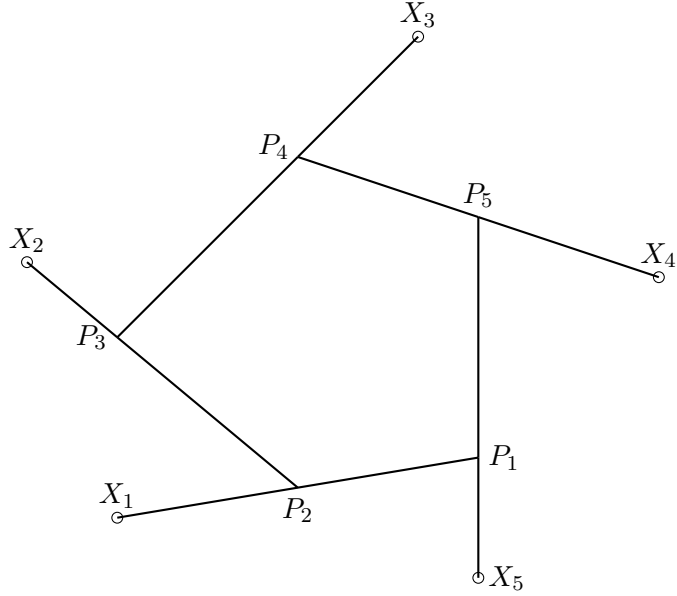
   
   The following elementary lemma, which follows by direct iteration,  clarifies  some computations below;  we omit its proof.
   
\begin{lem}\label{lem0} If quantities $P_k$ and $X_k$, $1\le k\le 5$, satisfy the relation $P_{k+1}=t_kX_k+(1-t_k)P_k$ with  $0<t_k<1$ for all $k$ modulo 5, then 
$
P_i=\sum_{j=1}^5 \nu_{i}^{j}X_{j}$ for all $1\le i\le 5,$ with coefficients $\nu_i^j$ being determined for $1\le i,j\le 5$ by
\[
\nu_{k+1}^k=\frac{t_k}{t},\quad \nu_{k+1}^{k-1}=\frac{(1-t_k)t_{k-1}}{t},\quad  \nu_{k+1}^{k-2} =\frac{(1-t_k)(1-t_{k-1})t_{k-2}}{t},
 \]
 \[
  \nu_{k+1}^{k-3} =\frac{(1-t_k)(1-t_{k-1})(1-t_{k-2})t_{k-3}}{t},
  \]
  \[
\nu_{k+1}^{k-4} =\frac{(1-t_k)(1-t_{k-1})(1-t_{k-2})(1-t_{k-3})t_{k-4}}{t},
\]
for all $k$ modulo 5, where $t=1-(1-t_1)\cdots(1-t_5).$
\end{lem}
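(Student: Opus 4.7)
The plan is to unroll the recurrence cyclically. Fix $k$ and substitute $P_k=t_{k-1}X_{k-1}+(1-t_{k-1})P_{k-1}$ into $P_{k+1}=t_kX_k+(1-t_k)P_k$, then repeat the substitution for $P_{k-1}$, $P_{k-2}$, $P_{k-3}$, until after exactly five substitutions the residual $P$-index becomes $k-4$, which equals $k+1$ modulo $5$. Collecting terms I obtain
\[
P_{k+1}=\sum_{j=0}^{4}\Bigl[\prod_{\ell=0}^{j-1}(1-t_{k-\ell})\Bigr]\,t_{k-j}X_{k-j}+\Bigl[\prod_{\ell=0}^{4}(1-t_{k-\ell})\Bigr]P_{k+1}.
\]

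Next, by definition $\prod_{\ell=1}^{5}(1-t_\ell)=1-t$, and because the indices are taken mod $5$ the product in the residual coefficient is the full product $\prod_{\ell=1}^{5}(1-t_\ell)=1-t$. Moving the residual term to the left gives $t\,P_{k+1}$ equal to the sum on the right, and since each $t_\ell\in(0,1)$ we have $t\in(0,1)$, so division by $t$ is legal. Reading off the coefficient of $X_{k-j}$ for $j=0,1,2,3,4$ yields precisely the stated expressions for $\nu_{k+1}^{k},\nu_{k+1}^{k-1},\nu_{k+1}^{k-2},\nu_{k+1}^{k-3},\nu_{k+1}^{k-4}$. Since $k$ was arbitrary, relabeling covers every $i\in\{1,\dots,5\}$.

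The only point that requires any care is the cyclic bookkeeping: one must check that after five substitutions the residual $P$-index closes up at $k+1$, and that the accumulated product of $(1-t_{k-\ell})$ matches the one appearing in each $\nu_{k+1}^{k-j}$. I would verify this by writing out all five expansions explicitly for one concrete value of $k$ and then appeal to the cyclic symmetry of the recurrence to cover the remaining cases. No further machinery is needed — the statement is purely algebraic, and everything reduces to inverting a single scalar fixed-point equation.
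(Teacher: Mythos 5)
Your proof is correct and matches exactly the route the paper indicates: the paper states that the lemma ``follows by direct iteration'' and omits the proof, and your five-fold cyclic unrolling followed by solving the resulting fixed-point equation $P_{k+1}=(\text{linear comb.\ of the }X_j)+(1-t)P_{k+1}$ is precisely that iteration. Your index bookkeeping ($k-4\equiv k+1$ mod $5$, and $\{k,k-1,\dots,k-4\}\equiv\{1,\dots,5\}$ so the residual coefficient is $\prod_{\ell=1}^{5}(1-t_\ell)=1-t$) is accurate, and the read-off of $\nu_{k+1}^{k-j}=t_{k-j}\prod_{\ell=0}^{j-1}(1-t_{k-\ell})/t$ for $j=0,\dots,4$ reproduces the stated coefficients.
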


A crucial property of the functions $F$ constructed is the following result, which is  the building block for convex integration of the  relation (\ref{pdr1}). 

\begin{thm}\label{lem1} 
Let $Y=(A,B)\in S_i(\lambda)$ for some $1\le i\le 5$ and $0\le \lambda<1.$  Let $
1>\mu'> \mu> \max\{\lambda, \,\nu_1,\,\delta_1\}$ be given numbers,   where $\nu_1$ and $\delta_1$ are the numbers in  (P2) and (P3). Assume  $Y=\lambda \xi_i(q)+(1-\lambda) \pi_i(q)$ for some $q\in B_{\beta}.$  Let
$X_k=\mu\xi_k(q)+(1-\mu) \pi_k(q)\in S_k(\mu)$ for all $1\le k\le 5.$ We have
\[
\pi_{k+1}(q)=\frac{\lambda_k(q)}{\mu}X_k+\left(1-\frac{\lambda_k(q)}{\mu}\right )\pi_k(q) \quad \forall \, k.
\]
By the lemma above,  we have $\pi_k(q)=\sum_{j=1}^5 \tilde \nu_k^j(\mu,q) X_j$ with coefficients $\tilde \nu_k^j(\mu,q)\in (0,1)$  satisfying  
\begin{equation}\label{eq-1}
\sum_{j=1}^5 \tilde \nu_k^j(\mu,q) =1, \quad  \tilde \nu_k^j(\mu,q)  >  (\mu-\nu_1)^4\nu_0 \quad \forall\, 1\le k,\, j\le 5,
\end{equation}
where $\nu_0$ is the number in (P2), and also 
\[
 Y=\left (\frac{\lambda}{\mu} +\big (1-\frac{\lambda}{\mu}\big ) \tilde \nu_i^i(\mu,q)\right )X_i+\sum_{1\le k\le 5,\, k\ne i} \big ( 1-\frac{\lambda}{\mu}\big ) \tilde \nu_i^k(\mu,q) X_k.
 \]
Then, for any bounded open set $G\subset \R^2\times\R$  and  $0<\epsilon<1$,  there exists  $(\varphi, \psi)\in C^\infty_c(\R^2\times\R;\R^2\times\MM)$ with support $\subset\subset G$ such that
\begin{itemize}
\item[(a)] $\dv\psi=0$ and $(A+D\varphi, B+\partial_t \psi)\in \Sigma(\mu')$ on $G;$  
\item[(b)] ${\int_{\R^2} \varphi(x,t)\,dx=0}$ for all $t\in \R;$ 
\item[(c)] $\|\varphi\|_{L^\infty(G)}+\|\partial_t \varphi\|_{L^\infty(G)}<\epsilon;$  
\item[(d)]  there exist disjoint open  subsets $G_1,\cdots,G_5$ of $G$ such that 
\begin{equation}\label{eq-d}
\begin{cases} (A+D\varphi, B+\partial_t\psi) \big |_{G_k} =X_k,\\
|G_i|> (1-\epsilon) \left (\frac{\lambda}{\mu} +(1-\frac{\lambda}{\mu}) \tilde \nu_i^i(\mu,q)\right ) |G|,\\
|G_k|> (1-\epsilon)   ( 1-\frac{\lambda}{\mu}) \tilde \nu_i^k(\mu,q) \,|G| \quad\forall\, k\ne i;
\end{cases} 
\end{equation}
in particular, $\sum_{j=1}^5 |G_j|>(1-\epsilon)|G|.$
\end{itemize}
 \end{thm}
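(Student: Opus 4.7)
The plan is to first dispatch the algebraic identities and then construct $(\varphi,\psi)$ by a five-step convex integration iteration. The formula $\pi_{k+1}(q)=(\lambda_k(q)/\mu)X_k+(1-\lambda_k(q)/\mu)\pi_k(q)$ comes from solving $X_k=\mu\xi_k(q)+(1-\mu)\pi_k(q)$ for $\xi_k(q)$ and substituting into (P2); the ratio $\lambda_k(q)/\mu$ lies in $(0,1)$ since $\lambda_k(q)<\nu_1<\mu$. Applying Lemma \ref{lem0} with $t_k=\lambda_k(q)/\mu$ yields $\pi_k(q)=\sum_j\tilde\nu_k^j(\mu,q)X_j$; the bound $\tilde\nu_k^j>(\mu-\nu_1)^4\nu_0$ is immediate from the explicit formulas in Lemma \ref{lem0} once one notes $t_k>\nu_0$ and $1-t_k>\mu-\nu_1$ (using $\mu<1$). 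Inserting this into $Y=(\lambda/\mu)X_i+(1-\lambda/\mu)\pi_i(q)$ reproduces the displayed representation of $Y$.

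\textbf{The single-step building block.} The analytic core I would invoke is a convex integration lemma for the parabolic relation $\dv\psi=0$: for any bounded open $H\subset\R^2\times\R$, any splitting $(P,Q)=tZ^++(1-t)Z^-$ with $Z^\pm\in\Sigma(\mu')$ and $Z^+-Z^-$ in the wave cone $\Lambda=\{(\alpha\otimes n,\gamma\otimes m)\in\MM\times\MM:n\cdot m=0\}$ of the system, and any $\eta>0$, there exist $(\varphi_0,\psi_0)\in C^\infty_c(H;\R^2\times\MM)$ with $\dv\psi_0=0$, $\int_{\R^2}\varphi_0(\cdot,t)\,dx\equiv 0$, $\|\varphi_0\|_\infty+\|\partial_t\varphi_0\|_\infty<\eta$, $(P+D\varphi_0,Q+\partial_t\psi_0)\in\Sigma(\mu')$ throughout $H$, and equal to $Z^\pm$ on disjoint open subsets of measures at least $(t-\eta)|H|$ and $(1-t-\eta)|H|$ respectively. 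The crucial geometric input is that the edges $\xi_k(q)-\pi_k(q)$ of the special $T_5$-configuration of \cite{Ya1} lie in $\Lambda$, so all splittings used below are wave-cone splittings. The block itself is built from one-dimensional oscillations of profile $h(\omega(x\cdot n-ct))$ in this $\Lambda$-direction, with high spatial frequency $\omega$ and slow wave speed $c=O(1/\omega)$ (which makes $\|\partial_t\varphi_0\|_\infty\to 0$ while $\partial_t\psi_0$ stays at the desired order), smoothly cut off to $H$ and corrected by an antidivergence operator in the spirit of \cite{KY1} to restore $\dv\psi_0=0$ and the vanishing spatial mean.

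\textbf{The five-step iteration.} Fix $\mu<\mu''<\mu'$ and $\eta$ small depending on $\epsilon$. Starting from $(\varphi,\psi)=(0,0)$ on $G$, so that the running value is $Y$, I apply the building block on $G$ to the splitting $Y=(\lambda/\mu)X_i+(1-\lambda/\mu)\pi_i(q)$, whose difference $X_i-\pi_i(q)=\mu(\xi_i(q)-\pi_i(q))$ lies in $\Lambda$; this yields disjoint open sets $G_i^{(1)}$ and $H_1\subset G$ where the running value equals $X_i$ and $\pi_i(q)$, with measures at least $(\lambda/\mu-\eta)|G|$ and $(1-\lambda/\mu-\eta)|G|$. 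On $H_1$ I apply the block again with the splitting $\pi_i(q)=(\lambda_{i-1}(q)/\mu)X_{i-1}+(1-\lambda_{i-1}(q)/\mu)\pi_{i-1}(q)$, and iterate five times (indices modulo $5$), peeling off one vertex $X_k$ at each step. Letting $G_k$ be the union of all final open pieces on which the running value equals $X_k$, a telescoping of the measure estimates with $\eta$ chosen small enough reproduces (\ref{eq-d}). Conditions (a), (b), (c) are preserved along the iteration because $\Sigma(\mu'')\subset\Sigma(\mu')$ and because the building-block corrections are supported in pairwise disjoint sets, so their sum inherits the vanishing-mean and small-$C^0$ properties.

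\textbf{Main obstacle.} The principal technical hurdle is the construction of the single-step building block itself: the simultaneous requirements of $\dv\psi_0=0$, coupling of $D\varphi_0$ and $\partial_t\psi_0$ along a prescribed wave-cone direction, smallness of both $\varphi_0$ and $\partial_t\varphi_0$, and strict containment of intermediate values in the open set $\Sigma(\mu')$ force a delicate balance of the oscillation frequency $\omega$, the wave speed $c$, and the antidivergence corrector. Once the building block is in place, the five-step peeling dictated by the $T_5$-configuration is essentially a bookkeeping argument on measures and disjoint supports.
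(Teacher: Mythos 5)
Your algebraic preamble is correct: the identity for $\pi_{k+1}(q)$ follows by substituting $\xi_k(q)=(X_k-(1-\mu)\pi_k(q))/\mu$ into (P2), the bound $\tilde\nu_k^j>(\mu-\nu_1)^4\nu_0$ follows from $t_k>\nu_0$, $1-t_k>\mu-\nu_1$ and $1/t>1$ in the explicit formulas of Lemma~\ref{lem0}, and the representation of $Y$ is a direct substitution. Your identification of the relevant wave cone $\Lambda$ for the relation $\dv\psi=0$ is also correct (plane waves $\varphi=a h(\omega(n\cdot x-ct))/\omega$, $\psi=W\tilde h$ with $Wn=0$ and $c\sim 1/\omega$). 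The paper itself only points to \cite[Thm.~5.1, Steps 1--2; Thm.~2.4]{Ya1} for the construction, so the unproved single-step building block is an acceptable level of detail here.

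The genuine error is in your ``five-step iteration.'' The $T_5$-configuration does not terminate after one trip around the loop: Step~$1$ peels $X_i$ off $Y$ and leaves the value $\pi_i(q)$, Steps~$2$--$5$ peel $X_{i-1},X_{i-2},X_{i-3},X_{i-4}$, and the residual piece — of measure fraction roughly $(1-\lambda/\mu)\prod_{k\ne i}(1-t_k)$, which is bounded away from zero — carries the value $\pi_{i-4}(q)=\pi_{i+1}(q)$, not any $X_k$. Your five-step scheme therefore leaves a set of definite positive measure where $(A+D\varphi,B+\partial_t\psi)$ is not one of the $X_k$, so the final claim $\sum_j|G_j|>(1-\epsilon)|G|$ and the individual lower bounds in (\ref{eq-d}) fail. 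The presence of $t=1-(1-t_1)\cdots(1-t_5)$ in the denominators of $\nu_i^j$ in Lemma~\ref{lem0} is exactly the geometric series $1/t=\sum_{m\ge 0}\bigl((1-t_1)\cdots(1-t_5)\bigr)^m$: the coefficients $\tilde\nu_i^j$ are the \emph{asymptotic} fractions of an iteration that cycles through the five legs infinitely often, each cycle multiplying the uncovered measure by the factor $1-t<1$. To get (\ref{eq-d}) you must run $N$ cycles with $N$ large enough that $(1-t)^N<\epsilon/2$ (and choose the per-step tolerance $\eta$ of order $\epsilon/N$ to keep the accumulated losses below $\epsilon/2$); only then does the truncated sum approximate $\tilde\nu_i^j$ to within the factor $1-\epsilon$. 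After five steps the fraction assigned to, say, $X_{i-1}$ is only about $(1-\lambda/\mu)t_{i-1}$, which is strictly less than the target $(1-\epsilon)(1-\lambda/\mu)t_{i-1}/t$ whenever $t<1-\epsilon$.

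A secondary point: you assert the single-step lemma keeps the perturbed value in $\Sigma(\mu')$ ``throughout $H$,'' but a plane-wave correction sweeps through a neighborhood of the segment $[Z^-,Z^+]$, not just its endpoints. You need the whole segment plus a margin to lie in $\Sigma(\mu')$; this is where the room $\mu<\mu''<\mu'$ and the explicit fact that $[X_k,\pi_k(q)]\subset\overline{S_k(\mu)}\subset\Sigma(\mu')$ enter, and it should be stated explicitly rather than implicitly assumed, especially since the building block is applied at all five legs of the configuration and at every cycle of the iteration.
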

  \begin{proof}  The theorem depends on further special features of $F$ in terms of  functions $\xi_i$ and $\pi_i$ and can be proved  in the same  way as  for the steps 1 and  2 in the proof of \cite[Theorem 5.1]{Ya1}  using  Theorem 2.4  of  the paper \cite{Ya1}; we refer the reader to that paper and omit the details  here. 
  \end{proof}

   \section{A simple antidivergence  operator}\label{s3}
  
Throughout  the rest of the paper,  given a bounded open set $G\subset\R^d$ and a function $f\in W^{1,\infty}(G;\R^q),$ we define  the Dirichlet class
\[
 W^{1,\infty}_f(G;\R^q)=\{g\in W^{1,\infty}(G;\R^q)\,|\;   (g-f)|_{\partial G}=0\}.
 \]

 In this section, we construct a simple antidivergence  operator on  rectangles in $\R^2$ to handle the linear constraint $u=\dv v$  in the relation (\ref{pdr1}). See {\cite[Section 2.4]{KY1}} for  the case of general dimensions and \cite{BB} for some deeper results. 

 Let $J=(a_1,b_1)\times (a_2,b_2)$ be a rectangle in $\R^2$. For any function $u=(u^1,u^2)\colon J \to \R^2,$ we define $v=\mathcal R^J u\colon  J\to \MM$ by setting  $v=(v^i_k(x))$  for  all $x=(x_1,x_2)\in J,$ with
 \begin{equation}
\begin{split} v^i_1(x)&=\rho(x_2) \int_{a_1}^{x_1}\left (\int_{a_2}^{b_2} u^i(r,s)ds\right )dr,\\
 v^i_2(x)&=\int_{a_2}^{x_2} u^i(x_1,s)\,ds - \int_{a_2}^{x_2}  \rho(s)\,ds  \int_{a_2}^{b_2} u^i(x_1,s)ds 
 \end{split} \quad (i=1,2),
 \end{equation}
  where $\rho\in C^\infty_c(a_2,b_2)$ is a fixed function with $\int_{a_2}^{b_2}\rho(s)ds=1.$ 
 
 We easily verify the following result.
 
 \begin{lem} \label{anti-div} The operator  $\mathcal R^J \colon C(\bar J;\R^2)\to C(\bar J;\MM)$  is  a bounded linear operator and satisfies $\dv v=u$ in $J.$  Moreover, if $u\in W^{1,\infty}(J;\R^2)$ then $v=\mathcal R^J u\in W^{1,\infty}(J;\MM);$ if $u\in W^{1,\infty}_0(J;\R^2)$ satisfies  $\int_J u(x)dx=0$, then  $v=\mathcal R^J u\in W_0^{1,\infty}(J;\MM).$ Furthermore, if $u\in C^1(\bar J;\R^2)$ then $v=\mathcal R^J u$ is in $C^1(\bar J;\MM).$ 
\end{lem}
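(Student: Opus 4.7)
The plan is to verify each assertion by direct computation from the explicit formulas defining $\mathcal R^J$. First I would use the fundamental theorem of calculus to compute $\partial_{x_1}v^i_1 = \rho(x_2)\int_{a_2}^{b_2} u^i(x_1,s)\,ds$ and $\partial_{x_2}v^i_2 = u^i(x_1,x_2) - \rho(x_2)\int_{a_2}^{b_2} u^i(x_1,s)\,ds$; adding these two identities yields $\dv v = u$. Linearity is transparent, and the bound $\|v\|_{L^\infty(J)} \le C(|J|,\|\rho\|_{L^\infty})\|u\|_{L^\infty(J)}$ is immediate from the formulas, so $\mathcal R^J$ is a bounded linear operator from $C(\bar J;\R^2)$ into $C(\bar J;\MM)$.

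The higher-regularity claims then follow by differentiating under the integral sign: when $u \in W^{1,\infty}(J;\R^2)$, the formulas for each $\partial_{x_j}v^i_k$ involve only $u$, $\partial_{x_1}u^i$, $\rho$, and $\rho'$, all essentially bounded, so $v \in W^{1,\infty}(J;\MM)$ with norm controlled by $\|u\|_{W^{1,\infty}(J)}$, $\|\rho\|_{W^{1,\infty}}$, and $|J|$. When $u \in C^1(\bar J;\R^2)$, these same quantities are continuous on $\bar J$, and continuity of $Dv$ follows, giving $v \in C^1(\bar J;\MM)$.

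It remains to check the boundary behaviour in the Dirichlet case, and this is the step I expect to be the main point of the proof, since it is the one that dictates the precise form of the operator. The plan is to inspect the four edges of $J$ in turn: on $\{x_1 = a_1\}$, $v^i_1$ vanishes because the outer integral is over $[a_1,a_1]$, and $v^i_2(a_1,\cdot) = 0$ because $u^i(a_1,\cdot) \equiv 0$ by the trace hypothesis; on $\{x_1 = b_1\}$, $v^i_1(b_1,x_2) = \rho(x_2)\int_J u^i\,dx = 0$ by the zero-average hypothesis, and $v^i_2(b_1,\cdot) = 0$ again by the trace; on the horizontal edges $x_2 \in \{a_2,b_2\}$ the factor $\rho(x_2)$ is zero by compact support of $\rho$, while $v^i_2(x_1,a_2) = 0$ trivially and $v^i_2(x_1,b_2) = 0$ follows from $\int_{a_2}^{b_2}\rho\,ds = 1$, which makes the two terms in the definition of $v^i_2$ cancel. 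All four edges thus combine both hypotheses $u|_{\partial J} = 0$ and $\int_J u\,dx = 0$ with the two defining properties of $\rho$, and no further difficulty arises.
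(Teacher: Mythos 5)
Your proof is correct, and it supplies what the paper itself does not: the paper dispenses with the argument as ``we easily verify the following result'' and gives no details. Your computation is the natural one and, as far as I can tell, the intended one: the FTC computation of $\partial_{x_1}v^i_1 + \partial_{x_2}v^i_2 = u^i$, the observation that the only derivative of $u$ appearing in $Dv$ is $\partial_{x_1}u^i$ (which appears in $\partial_{x_1}v^i_2$ by differentiating under the integral), and the edge-by-edge check of the Dirichlet boundary condition are all correct. In particular you correctly identify that the zero-average hypothesis $\int_J u\,dx = 0$ is used only on the edge $\{x_1 = b_1\}$, the trace condition $u|_{\partial J} = 0$ is used on the two vertical edges, the normalization $\int_{a_2}^{b_2}\rho\,ds = 1$ is used on $\{x_2 = b_2\}$, and the compact support of $\rho$ handles $v^i_1$ on both horizontal edges. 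One small point you might make explicit: for the $W^{1,\infty}$-regularity step, differentiation of $v^i_2$ in $x_1$ under the integral sign is legitimate because $\partial_{x_1}u^i \in L^\infty(J)$ and the $s$-integral is over the fixed compact interval $[a_2,b_2]$; and for the boundary statement, since $W^{1,\infty}$ functions on the rectangle have Lipschitz (hence continuous) representatives, the pointwise edge evaluations you perform do coincide with the trace. Neither of these affects the validity of the argument.
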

 
 \begin{remk} Other antidivergence operators $\mathcal A$ may be defined through certain unique solutions of  Poisson's equation $\Delta f=u$ on $J$ by setting  $v=\mathcal Au:=Df.$ Although they may have the $L^\infty$-$L^\infty$ bound on $W^{1,\infty}$ functions,  they don't satisfy the boundary property that  $v=\mathcal A u\in W_0^{1,\infty}(J;\MM)$ for all $u\in W^{1,\infty}_0(J;\R^2)$ satisfying $\int_J u(x)dx=0.$
 \end{remk}
 
Let $Q_0=(-1,1)^2$ be the cube in $\R^2$ and $Q=Q_0\times (-1,1)$ be the cube in $\R^2\times \R=\R^3.$ 
 
 \begin{lem}\label{div-inv}  Let  $u\in W^{1,\infty}_0(Q;\R^2)$ satisfy $\int_{Q_0} u (x,t)\,dx=0$ for all $t\in (-1,1).$  Then there exists a function $ v =\mathcal R  u\in W^{1,\infty}_0(Q;\MM)$ such that $\dv  v =u$ in $Q$ and
\begin{equation}\label{div-1}
\| \partial_t v\|_{L^\infty(Q)}  \le C_0  \|\partial_t u\|_{L^\infty(Q)},
\end{equation}
where $C_0$ is a constant. Moreover, if in addition $u\in C^1(\bar Q;\R^2)$ then $ v =\mathcal R  u \in C^1(\bar Q;\MM).$
\end{lem}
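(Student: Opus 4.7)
\medskip
\noindent\textbf{Proof proposal.} The plan is to apply the rectangle operator $\mathcal R^{J}$ of Lemma~\ref{anti-div} slicewise in the time variable. Concretely, for each $t\in[-1,1]$, the hypothesis guarantees $u(\cdot,t)\in W^{1,\infty}_0(Q_0;\R^2)$ with $\int_{Q_0} u(x,t)\,dx=0$, so I set
\[
v(x,t):=\bigl(\mathcal R^{Q_0}\,u(\cdot,t)\bigr)(x),\qquad (x,t)\in Q.
\]
By the last two assertions of Lemma~\ref{anti-div}, for each fixed $t$ we have $v(\cdot,t)\in W^{1,\infty}_0(Q_0;\MM)$ and $\dv_x v(\cdot,t)=u(\cdot,t)$ in $Q_0$. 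Since $\mathcal R^{Q_0}$ is given by the explicit formulas of Lemma~\ref{anti-div}, which involve only integrations and multiplications in the spatial variables (with the fixed cut-off $\rho$ independent of $t$), $v$ is jointly measurable on $Q$ and the map $t\mapsto v(\cdot,t)$ inherits whatever temporal regularity $u$ possesses.

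Next I verify the boundary conditions required for $v\in W^{1,\infty}_0(Q;\MM)$. On the lateral boundary $\partial Q_0\times(-1,1)$, the trace vanishes because $v(\cdot,t)\in W^{1,\infty}_0(Q_0;\MM)$ for every $t$. On the top and bottom faces $Q_0\times\{\pm 1\}$, the hypothesis $u\in W^{1,\infty}_0(Q;\R^2)$ yields $u(\cdot,\pm 1)=0$, so by linearity of $\mathcal R^{Q_0}$ we obtain $v(\cdot,\pm 1)=0$ as well. Together with the divergence identity, this gives all the pointwise-in-$t$ conclusions.

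For the key quantitative bound (\ref{div-1}), the explicit formulas for $\mathcal R^{Q_0}$ involve only integrations in the spatial variables $x_1,x_2$, and the multiplier $\rho(x_2)$ does not depend on $t$. Hence differentiation in $t$ under the integral sign is justified whenever $\partial_t u\in L^\infty(Q;\R^2)$, yielding
\[
\partial_t v(x,t)=\bigl(\mathcal R^{Q_0}\,\partial_t u(\cdot,t)\bigr)(x).
\]
Inspection of those formulas produces a universal constant $C_0$ (depending only on $Q_0$ and on $\|\rho\|_{L^\infty}$) such that $\|\mathcal R^{Q_0} w\|_{L^\infty(Q_0)}\le C_0\|w\|_{L^\infty(Q_0)}$ for every $w\in L^\infty(Q_0;\R^2)$ with zero mean. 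Applying this to $w=\partial_t u(\cdot,t)$ — whose mean vanishes because differentiating the constraint $\int_{Q_0} u(x,t)\,dx=0$ in $t$ gives $\int_{Q_0}\partial_t u(x,t)\,dx=0$ for a.e.\ $t$ — yields the desired bound $\|\partial_t v\|_{L^\infty(Q)}\le C_0\|\partial_t u\|_{L^\infty(Q)}$.

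Finally, for the additional $C^1$-regularity claim, if $u\in C^1(\bar Q;\R^2)$ then by Lemma~\ref{anti-div} the map $t\mapsto v(\cdot,t)$ takes values in $C^1(\bar Q_0;\MM)$, while the identity $\partial_t v=\mathcal R^{Q_0}(\partial_t u)$ together with the continuity of $\partial_t u$ on $\bar Q$ shows that $\partial_t v$ is continuous on $\bar Q$; combining with the spatial continuity of $Dv$ via the explicit formulas gives $v\in C^1(\bar Q;\MM)$. No step poses a real obstacle: the only point requiring care is the zero-mean propagation to $\partial_t u$, which follows from differentiating the constraint under the integral, and the interchange of $\partial_t$ with the spatial integral operator — both of which are routine once the explicit form of $\mathcal R^{Q_0}$ is in hand.
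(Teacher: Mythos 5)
Your proof is correct and follows the same approach as the paper: defining $v(\cdot,t)=\mathcal R^{Q_0}u(\cdot,t)$ slicewise in time and deriving the bound from the commutation $\partial_t(\mathcal Ru)=\mathcal R^{Q_0}(\partial_t u(\cdot,t))$. The paper's own proof is a two-line remark stating exactly this; you have simply filled in the routine details.
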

\begin{proof} Here we simply take $\mathcal Ru=\mathcal R^{Q_0}u(\cdot,t).$ The estimate (\ref{div-1}) follows from $\partial_t(\mathcal Ru)=\mathcal R^{Q_0}(\partial_t u(\cdot,t)).$ 
\end{proof}

For $y=(x,t)\in \R^2\times \R$ and $l>0,$ let $Q_{y,l}=y+lQ$  be the cube of center $y$ and side length $2l.$ For convenience,  we denote  the number $ l=\rad(Q_{y,l})$ as a radius  of $Q_{y,l},$ with a possible  abuse of notation.

Define the {\em rescaling operator} $\mathcal L_{y,l}  \colon W^{1,\infty}(Q;\R^d)\to W^{1,\infty}(Q_{y, l};\R^d)$ by  setting
\begin{equation}\label{scaling}
(\mathcal L_{y,l} f)(z)=lf(\frac{z- y}{l})\quad (z\in Q_{y,l}) \quad \forall \,f\in W^{1,\infty}(Q;\R^d).
\end{equation}
 Note that $\nabla (\mathcal L_{y,l} f)(z)=(\nabla f)(\frac{z- y}{l})$ for $z\in Q_{y, l},$ where $\nabla=(D_x,\partial_t)$ if $z=(x,t)\in \R^2\times \R.$
From the previous lemma, the following result is immediate.
 
 \begin{cor}\label{div-inv-1}  Let  $\varphi\in W^{1,\infty}_0(Q;\R^2)$ satisfy  
$
\int_{Q_0} \varphi (x,t)\,dx=0$ for all $t\in (-1,1).
$
  Let $\tilde \varphi =\mathcal L_{\bar y,l}\varphi$ and 
 $\tilde g=  \mathcal R_{y, l} \varphi:=l \mathcal L_{y,l} (\mathcal R \varphi)$ in $W^{1,\infty}_0(Q_{y, l};\MM).$
  Then 
\begin{equation}\label{div-2}
\mbox{$\dv \tilde g=\tilde \varphi$  in $Q_{y,l}$,} \quad \|\partial_t \tilde g \|_{L^\infty(Q_{y, l})}  \le C_0 l  \|\partial_t \tilde \varphi\|_{L^\infty(Q_{y, l})}.
\end{equation}
Moreover, if in addition $\varphi\in C^1(\bar Q;\R^2)$ then $\tilde g =\mathcal R_{y, l} \varphi \in C^1(\bar Q_{y, l};\MM).$
\end{cor}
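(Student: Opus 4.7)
The plan is to obtain this corollary as a direct scaling consequence of Lemma \ref{div-inv}. Concretely, I would first apply Lemma \ref{div-inv} to $\varphi\in W^{1,\infty}_0(Q;\R^2)$ (noting that the hypothesis $\int_{Q_0}\varphi(x,t)\,dx=0$ for every $t\in(-1,1)$ is exactly the one required there) to produce $v=\mathcal R\varphi\in W^{1,\infty}_0(Q;\MM)$ with $\dv v=\varphi$ in $Q$ and $\|\partial_t v\|_{L^\infty(Q)}\le C_0\|\partial_t\varphi\|_{L^\infty(Q)}$, together with the regularity statement that $v\in C^1(\bar Q;\MM)$ whenever $\varphi\in C^1(\bar Q;\R^2)$. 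The boundary-vanishing property of $v$ on $\partial Q$ ensures that after rescaling we stay in $W^{1,\infty}_0(Q_{y,l};\MM)$.

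Next I would unfold the definitions. By (\ref{scaling}) and the definition $\tilde g=l\mathcal L_{y,l}v$, one has $\tilde g(z)=l^2 v\!\left(\frac{z-y}{l}\right)$ for $z\in Q_{y,l}$; similarly $\tilde\varphi(z)=l\,\varphi\!\left(\frac{z-y}{l}\right)$. Differentiating in the spatial variables picks up a factor $1/l$, so with $z=(z_1,z_2,t)$,
\[
(\dv\tilde g)^i(z)=\sum_{k=1}^2\partial_{z_k}\tilde g^i_k(z)=l\sum_{k=1}^2(\partial_{x_k}v^i_k)\!\left(\tfrac{z-y}{l}\right)=l\,\varphi^i\!\left(\tfrac{z-y}{l}\right)=\tilde\varphi^i(z),
\]
which gives $\dv\tilde g=\tilde\varphi$ on $Q_{y,l}$. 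For the $t$-derivative, one differentiation in $t$ also supplies a factor $1/l$, so $\partial_t\tilde g(z)=l(\partial_t v)\!\left(\tfrac{z-y}{l}\right)$ and $\partial_t\tilde\varphi(z)=(\partial_t\varphi)\!\left(\tfrac{z-y}{l}\right)$. Hence
\[
\|\partial_t\tilde g\|_{L^\infty(Q_{y,l})}=l\,\|\partial_t v\|_{L^\infty(Q)}\le C_0 l\,\|\partial_t\varphi\|_{L^\infty(Q)}=C_0 l\,\|\partial_t\tilde\varphi\|_{L^\infty(Q_{y,l})},
\]
which is exactly (\ref{div-2}).

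Finally, the $C^1$ statement follows at once: if $\varphi\in C^1(\bar Q;\R^2)$ then Lemma \ref{div-inv} yields $v=\mathcal R\varphi\in C^1(\bar Q;\MM)$, and $C^1$ regularity is preserved under the affine change of variables $z\mapsto(z-y)/l$ and multiplication by the constant $l^2$, so $\tilde g=\mathcal R_{y,l}\varphi\in C^1(\bar Q_{y,l};\MM)$. There is essentially no obstacle here: the entire content is the bookkeeping of the scaling factors, and the only point that requires a glance is verifying that the divergence rescales with the single factor of $l$ that matches $\tilde\varphi=\mathcal L_{y,l}\varphi$ (which it does, since $\tilde g$ carries an extra factor of $l$ over $\mathcal L_{y,l}v$ precisely to absorb the $1/l$ produced by the spatial differentiation).
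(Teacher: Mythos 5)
Your proposal is correct and is exactly the argument the paper has in mind: the paper gives no written proof for this corollary, asserting only that it ``is immediate'' from Lemma \ref{div-inv}, and your write-up simply makes explicit the scaling bookkeeping (the extra factor of $l$ in $\mathcal R_{y,l}$ absorbing the $1/l$ from spatial differentiation, the single factor $l$ from $\partial_t(\mathcal L_{y,l}v)$, and the invariance of $C^1$ and of the zero-boundary property under the affine change of variables). Nothing is missing; this is the same proof the paper intends, written out.
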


   
   \section{Convex integration and the main construction}

In this section, we provide  the main crucial construction  for convex integration of the partial differential relation (\ref{pdr1}).

\begin{thm}\label{thm1} Let $G\subset \R^2\times \R$ be a bounded open set and let $(u,v)\in C^1(\bar G;\R^2\times\MM)$ satisfy  
\begin{equation}\label{strict-sub}
u=\dv v,\quad (Du,\partial_t v)\in \Sigma(\lambda') \;\; \mbox{on $\bar G$}
\end{equation}
 for some $0<\lambda'<1.$  Let  $ 0\le \lambda<  \lambda'<\mu<\mu'<1$ with $\mu> \max\{ \nu_1, \delta_1\}$ be given numbers such that 
 \begin{equation}\label{open}
 \mbox{$\{S_k(\lambda)\}_{k=1}^5$ is a  family of disjoint open sets.}
 \end{equation}
  Then, for all $0<\epsilon<1$,   there exists a function $(\tilde u,\tilde v)\in W^{1,\infty}_{(u,v)}(G;\R^2\times \MM)$ that is piece-wise $C^1$ on at most countable partition of $G$ by disjoint closed cubes $\{\bar Q_j\}_{j=1}^M$  $(M\le \infty)$  such that
\begin{itemize}
\item[(a)] $0<\rad(Q_j)<\epsilon$ for all $j;$   
 \item[(b)] $\tilde u=\dv \tilde v, \; (D\tilde u,  \partial_t \tilde v)\in \Sigma(\mu') $ on each $\bar Q_j;$  
 \item[(c)] $\|\tilde u-u\|_{L^\infty(G)}+ \|\partial_t \tilde u-\partial_t  u\|_{L^\infty(G)}<\epsilon;$
\item[(d)] $|Q_j\cap \{(D\tilde u,\partial_t \tilde v) \in K(\mu) \}| >(1-\epsilon)\,|Q_j|$ for all $j;$
\item[(e)]  for all $1\le k\le 5,$
\[
\begin{cases} 
 |\{(D\tilde u,\partial_t \tilde v) \in S_k(\mu) \}| >\frac12 (\mu- \lambda') (\mu-\nu_1)^4\nu_0\,|G|,\\
 |\{(D\tilde u,\partial_t \tilde v) \in S_k(\mu) \}| >\frac{\lambda}{\mu} |\{(D u,\partial_t  v) \in  S_k(\lambda) \}|; \end{cases}
\]
\item[(f)] if $G_0=G\cap \{(D u,\partial_t  v) \notin K(\lambda)\},$ then 
\[
\|D\tilde u-Du\|_{L^1(G)}\le C\big[ |G_0|+ \big  (\epsilon+(\mu- \lambda)\big ) |G|\big ] ,
\]
where $C>0$  depends only on the polyconvex function $F.$
\end{itemize}  
 \end{thm}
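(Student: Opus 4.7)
I would partition $G$ (up to a null set) into small disjoint closed cubes $\bar Q_j=\bar Q_{y_j,l_j}$ on which $(Du,\partial_t v)$ is nearly constant, and apply the building block Theorem~\ref{lem1} on each cube to lift $(Du,\partial_t v)$ from $\Sigma(\lambda')$ into $\Sigma(\mu')$ while depositing a controlled fraction of mass into each $S_k(\mu)$. Since the pair $(\varphi,\psi)$ supplied by Theorem~\ref{lem1} satisfies $\dv\psi=0$ and not $\dv\psi=\varphi$, the constraint $\tilde u=\dv\tilde v$ will be reinstated on each cube by adding a small antidivergence correction taken from Corollary~\ref{div-inv-1}.

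\textbf{Local construction.} Fix small auxiliary parameters $\eta,\epsilon_0>0$ (to be chosen at the end in terms of $\epsilon,\mu,\mu',\lambda,\lambda',F$). Using uniform continuity of $(Du,\partial_t v)$ on $\bar G$, pick $l_0<\epsilon$ so that this pair oscillates by at most $\eta$ on any subcube of $G$ of radius $\le l_0$, and by a Vitali-type argument cover $G$ modulo a null set by disjoint closed cubes $\bar Q_j\subset G$ with $l_j=\rad(Q_j)<l_0$, arranging in addition that $\sum_{Q_j\cap G_0\neq\emptyset}|Q_j|\le|G_0|+\epsilon|G|$ (possible since $G_0$ is closed in $G$). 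On each cube set $(A_0,B_0)=(Du(y_j),\partial_t v(y_j))\in\Sigma(\lambda')$ and pick $i=i(j)$, $\lambda_0=\lambda_0(j)\in[0,\lambda')$, $q_j\in B_\beta$ with $(A_0,B_0)=\lambda_0\xi_i(q_j)+(1-\lambda_0)\pi_i(q_j)\in S_i(\lambda_0)$. Apply Theorem~\ref{lem1} to $(A_0,B_0,i,\lambda_0,\mu,\mu')$ on the unit cube $Q$ with parameter $\epsilon_0$, obtaining $(\varphi_j,\psi_j)\in C^\infty_c(Q;\R^2\times\MM)$ and disjoint open sets $G_1^j,\dots,G_5^j\subset Q$ on which $(A_0+D\varphi_j,B_0+\partial_t\psi_j)=X_k^j:=\mu\xi_k(q_j)+(1-\mu)\pi_k(q_j)$; rescale $\tilde\varphi_j=\mathcal L_{y_j,l_j}\varphi_j$ and $\tilde\psi_j=\mathcal L_{y_j,l_j}\psi_j$, and form $\tilde g_j=\mathcal R_{y_j,l_j}\varphi_j$ via Corollary~\ref{div-inv-1}, so $\dv\tilde g_j=\tilde\varphi_j$ and $\|\partial_t\tilde g_j\|_{L^\infty}\le C_0l_j\epsilon_0$. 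Define
\[
\tilde u=u+\tilde\varphi_j,\qquad \tilde v=v+\tilde\psi_j+\tilde g_j\quad\text{on }\bar Q_j,
\]
extended by $(u,v)$ off $\bigcup_j Q_j$; the three perturbations vanish on $\partial Q_j$, so $(\tilde u-u,\tilde v-v)\in W^{1,\infty}_0(G;\R^2\times\MM)$, and the restriction of $(\tilde u,\tilde v)$ to each $\bar Q_j$ is $C^1$.

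\textbf{Verification of (a)--(e).} The divergence identity is immediate: $\dv\tilde v=u+0+\tilde\varphi_j=\tilde u$. On each rescaled subset $\tilde G_k^j:=y_j+l_j G_k^j$, the ``target'' pair $(A_0+D\tilde\varphi_j,B_0+\partial_t\tilde\psi_j)$ equals $X_k^j\in S_k(\mu)$, while the actual pair $(D\tilde u,\partial_t\tilde v)$ differs from the target by $(Du-A_0,\partial_t v-B_0+\partial_t\tilde g_j)$, of $L^\infty$-norm $\le\eta+C_0l_j\epsilon_0$. Since $\Sigma(\mu')$ is open and each $S_k(\mu)$ is open (as $\mu>\delta_1$, by (P3)(c)), choosing $\eta$ and $\epsilon_0$ small enough yields $(D\tilde u,\partial_t\tilde v)\in\Sigma(\mu')$ throughout each $\bar Q_j$ and $\in S_k(\mu)$ on $\tilde G_k^j$; properties (a), (c), (d) then follow from the rescaling combined with Theorem~\ref{lem1}(b)--(d). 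For the first part of (e), every cube contributes $|\tilde G_k^j|\ge(1-\epsilon_0)(1-\lambda_0/\mu)\tilde\nu_i^k(\mu,q_j)|Q_j|\ge(1-\epsilon_0)(\mu-\lambda')(\mu-\nu_1)^4\nu_0|Q_j|$, using $\lambda_0<\lambda'$, $\mu\le 1$ and (\ref{eq-1}); summing over $j$ and taking $\epsilon_0\le 1/2$ gives the bound. For the second part of (e), sum only over cubes with $(A_0,B_0)\in S_k(\lambda)$, where $\lambda_0=\lambda$ and $i=k$ force $|\tilde G_k^j|\ge(1-\epsilon_0)(\lambda/\mu+(1-\lambda/\mu)\tilde\nu_k^k)|Q_j|$, strictly exceeding $(\lambda/\mu)|Q_j|$ for small $\epsilon_0$.

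\textbf{Estimate (f) and main obstacle.} Classify cubes into Type B ($y_j\notin G_0$, so $\lambda_0=\lambda$ exactly and $(A_0,B_0)\in S_i(\lambda)$) and Type A (the rest). On a Type B cube, $|\mathbb P X_i^j-A_0|=(\mu-\lambda)|\mathbb P\xi_i(q_j)-\mathbb P\pi_i(q_j)|\le C(\mu-\lambda)$ while $|\mathbb P X_k^j-A_0|\le C$ for $k\neq i$; combining with the complementary measure bound $\sum_{k\neq i}|\tilde G_k^j|\le((\mu-\lambda)/\mu+\epsilon_0)|Q_j|$ derived from Theorem~\ref{lem1}(d) yields $\int_{Q_j}|D\tilde u-Du|\le C(\mu-\lambda+\epsilon_0+\eta)|Q_j|$, so the total over Type B cubes is $\le C(\mu-\lambda+\epsilon)|G|$. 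On Type A cubes the uniform $F$-dependent bound $|D\tilde u-Du|\le C$ and the Vitali control on $\sum_{\text{Type A}}|Q_j|$ yield a total $\le C(|G_0|+\epsilon|G|)$, so (f) follows. The principal technical hurdle is precisely this two-tier accounting: the decisive $(\mu-\lambda)$ factor in (f) materializes only where $(Du,\partial_t v)\in K(\lambda)$ exactly, while cubes meeting $G_0$ contribute only the trivial bound and must be packed tightly by the Vitali argument so their total measure exceeds $|G_0|$ by at most $\epsilon|G|$; simultaneously the continuity defect $\eta$ must be absorbed by the openness margins of $\Sigma(\mu')$ and of each $S_k(\mu)$, which couples the choices of $l_0,\eta,\epsilon_0,\epsilon$.
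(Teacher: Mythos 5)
Your overall strategy matches the paper's: cover $G$ (modulo a null set) by small disjoint closed cubes, apply the building-block Theorem~\ref{lem1} at each cube center, rescale via $\mathcal L_{y,l}$, restore the divergence constraint with $\mathcal R_{y,l}$, and glue. The verifications of (a)--(d) and (f) are organized the same way as the paper's proof, and your Type A/Type B split for (f) is essentially the paper's decomposition over $G_0'$ versus $\cup_{k\ge1}G_k'$.

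However, there is a genuine gap in your treatment of the second inequality of (e). You propose to control the covering only by the additive constraint $\sum_{Q_j\cap G_0\neq\emptyset}|Q_j|\le|G_0|+\epsilon|G|$, which yields at best $\sum_{Q_j\subset G_k}|Q_j|\ge|G_k|-\epsilon|G|$ for each $k$. But (e) requires $|\{(D\tilde u,\partial_t\tilde v)\in S_k(\mu)\}|>\frac{\lambda}{\mu}|G_k|$, a \emph{multiplicative} gain over $|G_k|$. When $|G_k|$ is small compared with $\epsilon|G|$ (which can happen for fixed $\epsilon$ since the $G_k$ are given, not chosen), your bound $\sum_{Q_j\subset G_k}|Q_j|\ge|G_k|-\epsilon|G|$ is vacuous, and the per-cube surplus $(1-\epsilon_0)\bigl(\lambda/\mu+(1-\lambda/\mu)\tilde\nu_k^k\bigr)-\lambda/\mu>0$ cannot recoup an $O(\epsilon|G|)$ additive loss of mass. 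The paper avoids this by first replacing each $G_k$ with an open $G_k'\subset G_k$ satisfying $|\partial G_k'|=0$ and $|G_k\setminus G_k'|\le\epsilon'|G_k|$ (multiplicative), covering each $G_k'$ fully by Vitali cubes contained in $G_k'$, and choosing $\epsilon'$ so that $(1-\epsilon')^2\bigl(\lambda/\mu+(\mu-\lambda')(\mu-\nu_1)^4\nu_0\bigr)>\lambda/\mu$. This shrinking step also resolves a second issue you gloss over: it guarantees that each covering cube lies entirely inside a single $G_k'$ (or inside the open residual $G_0'$), so that $(Du,\partial_t v)$ lies in the appropriate $S_k(\lambda)$ on the \emph{whole} cube, and it makes the residual $G_0'$ open and therefore Vitali-coverable. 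In your setup the cubes may straddle $G_0$ and several $G_k$, and the residual set after covering $\cup_k G_k$ need not be open, so the claimed partition with the stated additive bound is not justified. A minor further point: you use a uniform modulus $l_0$ and openness margins for $\Sigma(\mu')$ and $S_k(\mu)$, whereas the paper chooses $l_{\bar y}$ pointwise precisely because the distance from the perturbed target to $\partial\Sigma(\mu')$ and $\partial S_j(\mu)$ depends on $\bar y$; a compactness argument could rescue your uniform choice, but as written it is not established.
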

 
 \begin{remk} (i) A function $(u,v)$ satisfying (\ref{strict-sub}) with $0<\lambda'<1$ is called a  (strict) {\em subsolution} of the relation (\ref{pdr1}) on $G$.
 
 (ii) The condition (\ref{open}) seems necessary for  the theorem; nevertheless, by Property (P3), it is always satisfied when $\lambda=0$ or $\lambda\ge \delta_1.$ This will be sufficient  for the proof in the next section.
 \end{remk}

  \begin{proof}[Proof of Theorem \ref{thm1}] Let $G_k= G\cap \{(D u,\partial_t  v) \in S_k(\lambda)\}$ for $1\le k\le 5.$ Then $G_k$ is open for each $1\le k\le 5$ and the family $\{G_k\}_{k=0}^5$ is disjoint and covers $G$, but some $G_k$ may be empty. For each $1\le  k\le 5,$ we select another  open set  $G_k'\subset G_k$  such that
  \begin{equation}\label{set-G'}
  |\partial G_k'|=0,\quad |G_k\setminus G_k'|\le \epsilon'\,|G_k|,
  \end{equation}
where  $0<\epsilon'<\epsilon$ is a sufficiently small number to be determined  later; thus $G_k'\ne \emptyset$ if and only if $G_k\ne \emptyset.$ Define the open set
\[
G'_0= G\setminus \overline{\cup_{k=1}^5  G_k'}= G\setminus \cup_{k=1}^5  \overline{G_k'}.
\]
   Then $\{G'_j\}_{j=0}^5$ is a family of disjoint open sets in $G$  and, since $ |\partial G_k'|=0$,
    \[
 G =(\bigcup_{k=0}^5 G_k')\cup N, \quad |N|=0.
 \]
  
 For each $\bar y\in G,$  the point $Y=(A,B)=(Du (\bar y),\partial_t  v(\bar y))$ belongs to $\Sigma(\lambda');$ thus  $Y\in S_{i'}(\lambda'')$ for some $1\le i' \le 5$ and $0\le\lambda''<\lambda'.$ In particular, if $\bar y\in G_k$ for some $1\le k\le 5$,  then we set  $i'=k$ and $\lambda''=\lambda.$ With this notation, we write $Y=\lambda''\xi_{i'}(q')+(1-\lambda'')\pi_{i'}(q')$ for some $q' \in B_{\beta}.$  Let  $X_j=\mu\xi_j(q')+(1-\mu)\pi_j(q')\in S_j(\mu)$ for all $1\le j\le 5.$  
 We  apply Theorem \ref{lem1} to $Y$ and $\{X_j\}$ with $G$ being the cube $Q$  to obtain  $ (\varphi, \psi)\in C^\infty_c(Q;\R^2\times\MM)$ such that
\begin{itemize}
\item[(i)] $\dv\psi=0$ and $(A+D\varphi, B+\partial_t \psi)\in \Sigma(\mu')$ on $\bar Q;$  
\item[(ii)]  ${\int_{Q_0} \varphi(x,t)\,dx=0}$ for all $t\in (-1,1);$ 
\item[(iii)]  $\|\varphi\|_{L^\infty(Q)}+\|\partial_t \varphi\|_{L^\infty(Q)}<\epsilon';$  
\item[(iv)]  there exist disjoint open  subsets $P_1,\cdots, P_5$ of $Q$ such that 
\begin{equation}\label{eq-d2}
\begin{cases} (A+D\varphi, B+\partial_t\psi) \big |_{P_j} =X_j \quad \forall\, 1\le j\le 5,\\
|P_{i'}|>(1-\epsilon') \left (\frac{\lambda''}{\mu} +(1-\frac{\lambda''}{\mu}) \tilde \nu_{i'}^{i'}(\mu,q)\right ) |Q|,\\
|P_j|> (1-\epsilon')   ( 1-\frac{\lambda''}{\mu}) \tilde \nu_{i'}^j(\mu,q) \,|Q| \quad\forall\, j\ne i'.
\end{cases} 
\end{equation} 
\end{itemize}
Note that $\sum_{j=1}^5 |P_j|> (1-\epsilon')|Q|.$ Moreover, since  $\lambda''<\lambda'$, by (\ref{eq-1}), 
\begin{equation}\label{eq-e}
\begin{cases}  
|P_{i'}|>(1-\epsilon') \left (\frac{\lambda''}{\mu} +(\mu-\lambda')(\mu-\nu_1)^4\nu_0\right ) |Q|,\\
|P_j|> (1-\epsilon')  (\mu-\lambda')(\mu-\nu_1)^4\nu_0 \,|Q| \quad\forall\,1\le j\le 5.
\end{cases} 
\end{equation}

Given any $0<l<\epsilon$, let 
$
 (\tilde \varphi, \tilde \psi)= \mathcal L_{\bar y, l} (\varphi, \psi) $ and $  \tilde  g=\mathcal R_{\bar y,l}  \varphi$ be the rescaled functions  on $\bar Q_{\bar y,l}$ as defined in Section \ref{s3}.  Define
\begin{equation}\label{new-fun0}
\tilde u  =u_{\bar y,l}=  u +\tilde \varphi,\;\; \tilde  v  =v_{\bar y,l}=  v + \tilde \psi + \tilde g \; \; \mbox{ on $\bar Q_{\bar y,l}.$}
 \end{equation}
 Then, $\tilde u \in  u +C_c^\infty(Q_{\bar y,l};\R^2)$, $\tilde  v \in W^{1,\infty}_{v}(Q_{\bar y,l};\MM)\cap C^1(\bar Q_{\bar y,l})$, and 
 \[
\begin{cases}
  \tilde u= \dv \tilde  v,\\
  \|\tilde u-u\|_{L^\infty(Q_{\bar y,l})}+\|\partial_t \tilde   u -\partial_t  u\|_{L^\infty(Q_{\bar y,l})} < \epsilon',\\
 \|\partial_t \tilde  g\|_{L^\infty(Q_{\bar y,l})}\le  C_0 l \|\partial_t \varphi\|_{L^\infty(Q)}<C_0 l.
\end{cases}
\]
Note that
\[
\begin{split} &\Big |(D\tilde  u(y),\partial_t \tilde v(y))-\big (A+D\varphi(\frac{y-\bar y}{l}), B+\partial_t\psi(\frac{y-\bar y}{l})\big ) \Big |\\
&\le |D u(y)-D u(\bar y)|+|\partial_t  v(y)-\partial_t  v(\bar y)|+|\partial_t \tilde g(y)|  \mbox{\;  on  $  \bar Q_{\bar y,l}.$} 
\end{split}
\]

By the  continuity of $(Du,\partial_t v)$  and the openness of sets $G'_k, \, \Sigma(\mu')$ and $S_j(\mu)$, we select a sufficiently small $l_{\bar y} \in (0,\epsilon)$ such  that  for all $0<l<l_{\bar y},$
\begin{equation}\label{cubes}
 \begin{cases}  \bar Q_{\bar y,l}\subset G'_k \;\; \mbox{if $\bar y\in G'_k$ for some $0\le k\le 5,$} \\
 (D u, \partial_t v)\big |_{\bar Q_{\bar y,l}} \in B_\epsilon(Y), \;\;  (D\tilde  u, \partial_t \tilde  v)\big |_{\bar Q_{\bar y,l}} \in \Sigma(\mu'),  \\
  (D\tilde  u, \partial_t \tilde  v)\big |_{\tilde P_j} \in  S_j(\mu)\cap B_\epsilon (X_j) \quad  \forall\, 1 \le j \le 5,\end{cases}
\end{equation}
where $\tilde P_j$'s  are the disjoint open sets of $Q_{\bar y,l}$ obtained by rescaling $P_j.$  Hence, 
\begin{equation}\label{pro-d}
\begin{cases}  
| Q_{\bar y,l}\cap \{(D\tilde u,\partial_t\tilde v)\in K(\mu)\}|\ge |\cup_{j=1}^5 \tilde P_j|>(1-\epsilon')\,|Q_{\bar y,l}|,\\
|\tilde P_{i'}|>(1-\epsilon') \left (\frac{\lambda''}{\mu} +(\mu-\lambda')(\mu-\nu_1)^4\nu_0 \right ) |Q_{\bar y,l}|,\\
|\tilde P_j|> (1-\epsilon')  (\mu-\lambda')(\mu-\nu_1)^4\nu_0\,|Q_{\bar y,l}| \quad\forall\,1\le j\le 5.
\end{cases} 
\end{equation}

For each $0\le k\le 5$ with $G_k\ne \emptyset,$ the nonempty open set $G_k'$ is covered by the family of closed cubes 
$\{\bar Q_{\bar y,l}\, |\;\bar y\in G'_k,\; 0<l<l_{\bar y}\}$ in the sense of   {\em Vitali covering}  (see \cite{D}); thus, we have
 \[
G'_k = \big(\bigcup_{n=1}^{m_k}\bar Q^k_n \big)\cup N_k,\quad 1\le m_k\le \infty,\;\; |N_k|=0,
 \]
 where the cubes $\bar Q^k_n=\bar Q_{\bar y_n^k,l_n^k}\subset G'_k$ are such that  $\bar y_n^k\in G'_k,$ $0<l_n^k=\rad(Q_n^k)<\epsilon$ and $\bar Q_n^k\cap\bar Q_m^k=\emptyset$ for  all $n\ne m.$ 
 Clearly if $G_j\ne \emptyset$ and $j\ne k$ then $\bar Q_n^k\cap\bar Q_m^j=\emptyset$ for all $1\le n \le m_k$  and $1\le m\le m_j$.
 We thus achieve  an at most countable partition of $G$ by disjoint closed cubes:
  \begin{equation}\label{near-null}
G=\big (\bigcup_{k=0}^5\bigcup_{n=1}^{m_k} \bar Q^k_n \big )\cup N,\quad |N|=0,
 \end{equation}
where we set $m_k=0$ if $G_k=\emptyset$ and drop the $k$-terms from the union.

For each $0\le k\le 5$ with $G_k\ne\emptyset,$  let $\tilde  u^k _{n}=  u _{\bar y^k_n,l^k_n}$ and $\tilde  v^k_n = v_{\bar y^k_n,l^k_n}$ be the functions defined by (\ref{new-fun0}) on $\bar Q^k_n=\bar Q_{\bar y^k_n,l^k_n}$ with center $\bar y^k_n\in  G'_k$ and $\rad(Q^k_n)<\epsilon.$ Define
\begin{equation}\label{final-fun}
(\tilde  u,\tilde v)  =    \sum_{k=0}^5\sum_{n=1}^{m_k} (\tilde  u^k_n, \tilde v^k_n) \chi_{\bar Q^k_n}+(u,v)\chi_N,
\end{equation}
where  again no $k$-terms are in  the summation  if $m_k=0.$ 
Then  $(\tilde u,\tilde v) \in W_{(u,v) }^{1,\infty}(G;\R^2\times \MM)$ and $(\tilde u,\tilde v) \big |_{\bar Q^k_n} \in C^1(\bar Q^k_n;\R^2\times \MM)$ for all $0\le k\le 5$ and $1\le n\le m_k.$ We define the cubes $\{\bar Q_j\}_{j=1}^M$ by re-indexing: 
\[
\{\bar Q_n^k\,|\; 0\le k\le 5,\; 1\le n\le m_k\}=\{\bar Q_j \,|\; 1\le j\le M\}.
\]

 It is easily seen that the requirements (a)-(c) are satisfied; the requirement (d) also follows readily from the first line of (\ref{pro-d}). 
 
 To satisfy  the first line of the requirement (e), note that  by the third line of (\ref{pro-d}) we have
 \[
  |\{(D\tilde u,\partial_t \tilde v)\in S_j(\mu)\}|  \ge (1-\epsilon')  (\mu-\lambda')(\mu-\nu_1)^4\nu_0 \sum_{k=0}^5 \sum_{n=1}^{m_k} |Q_n^k|\]
  \[
 = (1-\epsilon')    (\mu-\lambda')(\mu-\nu_1)^4\nu_0  |G| 
  \]
   \[
> \frac12 (\mu-\lambda')(\mu-\nu_1)^4\nu_0  |G|, 
 \]
which holds   if   the number $\epsilon'\in (0,\epsilon)$ is  chosen to  satisfy $ \epsilon'  <1/2.$  

The second  line of the requirement (e) is automatically satisfied if $G_k=\emptyset.$ Now assume $1\le k\le 5$ with  $G_k\ne \emptyset.$ In this case, we have   $i'=k$ and $\lambda''=\lambda$ in (\ref{eq-e}) and thus, by (\ref{set-G'}) and the second line of  (\ref{pro-d}) 
 \[
  |\{(D\tilde u,\partial_t \tilde v)\in S_k(\mu)\}|    \ge   \sum_{n=1}^{m_k}  | \bar Q_n^k \cap \{(D\tilde u^k_n,\partial_t\tilde v^k_n)\in S_k(\mu)\}|  \]
  \[
  > (1-\epsilon') \left (\frac{\lambda}{\mu} +(\mu-\lambda')(\mu-\nu_1)^4\nu_0\right ) \sum_{n=1}^{m_k} |Q_n^k|\]
  \[
  = (1-\epsilon') \left (\frac{\lambda}{\mu} +(\mu-\lambda')(\mu-\nu_1)^4\nu_0 \right )  |G_k'|\]
  \[
  \ge (1-\epsilon')^2 \left (\frac{\lambda}{\mu} +(\mu-\lambda')(\mu-\nu_1)^4\nu_0 \right )  |G_k|. 
 \]
Thus the second line of the requirement (e) is ensured  if   the number $\epsilon'\in (0,\epsilon)$ is chosen to further satisfy
 \begin{equation}\label{small1}
 (1-\epsilon')^2 \left (\frac{\lambda}{\mu} +(\mu-\lambda')(\mu-\nu_1)^4\nu_0 \right ) >\frac{\lambda}{\mu},
 \end{equation}
   which is possible because  $(\mu-\lambda')(\mu-\nu_1)^4\nu_0>0.$ Therefore, both requirements in  (e) are  ensured. 
 
  Finally, to verify the requirement (f),  note that 
 \begin{equation}\label{f0}
 \begin{split}
& \|D\tilde u-Du\|_{L^1(G)} =\sum_{k=0}^5  \|D\tilde u-Du\|_{L^1(G_k)} 
\\
 &\le C|G_0| + \sum_{k=1}^5  \|D\tilde u-Du\|_{L^1(G_k)} 
 \\
 &\le C|G_0| + \sum_{k=1}^5 C |G_k\setminus G'_k|  +\sum_{k=1}^5  \|D\tilde u-Du\|_{L^1(G'_k)} 
 \\
 &
 \le C(|G_0| + \epsilon'|G|)  + \sum_{k=1}^5\sum_{n=1}^{m_k} \int_{Q^k_n} |(D\tilde u,\partial_t \tilde v)-(D u,\partial_t  v)| 
 \\
 &
 \le C(|G_0| + \epsilon |G|)  + \sum_{k=1}^5\sum_{n=1}^{m_k}\int_{\tilde P^k_{n}}|(D\tilde u,\partial_t \tilde v) -(D u,\partial_t  v)| 
 \\
& \qquad\qquad \qquad \qquad + \sum_{k=1}^5\sum_{n=1}^{m_k} \int_{Q^k_n\setminus \tilde P^k_{n}}|(D\tilde u,\partial_t \tilde v)-(D u,\partial_t  v)|, \end{split}
\end{equation}
  where  $\tilde P^k_{n}\subset Q_n^k=Q_{\bar y^k_n,l^k_n}$ for all $1\le k\le 5$ and  $1\le n\le m_k$ are the sets defined as in (\ref{cubes}) with $Y=Y_n=(Du(\bar y_n),\partial_t v(\bar y_n))\in S_k(\lambda)$ and with $i'=k$ and $\lambda''=\lambda$ in  (\ref{pro-d}). By (\ref{small1}),
  \[
  (1-\epsilon')  \left (\frac{\lambda}{\mu} +(\mu-\lambda')(\mu-\nu_1)^4\nu_0  \right ) >  \frac{\lambda}{\mu}, 
  \]
  and thus $|\tilde P^k_{n}|>\frac{\lambda}{\mu}| Q_n^k|.$  Hence
 \begin{equation}\label{f2}
  \int_{Q^k_n\setminus \tilde P^k_{n}}  |(D\tilde u, \partial_t \tilde v) -(D u,\partial_t  v)|  \le C|Q^k_n\setminus \tilde P_n^k|<C(\mu-\lambda)|Q^k_n|.
\end{equation}
  Let $X_k=X^k_{n}\in S_k(\mu)$ be defined as in (\ref{cubes}) with  $Y=Y_n$  by some $q_n\in B_{\beta}$. Then 
  $
   |X_n^k-Y_n|=(\mu-\lambda)|\xi_k(q_n)-\pi_k(q_n)|\le C(\mu-\lambda);$  thus, by (\ref{cubes}) and (\ref{pro-d}),  we have
\begin{equation}\label{f1}
 \begin{split}
 \int_{\tilde P_n^k}  |(D\tilde u,&\partial_t \tilde v)  -(D u,\partial_t  v)| \\
 &  \le  \int_{\tilde P^k_n} \left (|(D\tilde u,\partial_t \tilde v)-X_n^k|+|X_n^k-Y_n|+|(D u,\partial_t  v)-Y_n|\right )\\
 & \le C(\epsilon+(\mu-\lambda))|Q^k_n|. \end{split}
  \end{equation}
Plugging (\ref{f2}) and (\ref{f1}) into (\ref{f0}) proves the requirement (f).  

The proof is completed.  
\end{proof}


  \section{Proof of the main theorem}  
 Given a function $f\in L^\infty(G;\R^q)$ on an open set $G\subset \R^d$,  for any $x_0\in G$, we define the {\em essential oscillation} of $f$ at $x_0$ by
  \[
  \omega_f(x_0)=\inf\{\|f(x)-f(y)\|_{L^\infty(U\times U)}\; |\; U\subset G, \;  \mbox{$U$  open, } \;  x_0\in  U \}.
  \]
  We say that $f$ is {\em essentially continuous at $x_0$} if $\omega_f(x_0)=0.$ Clearly, if $f$ is continuous at $x_0$ then it must be essentially continuous at $x_0,$  but  the converse is false, as easily  seen  by the Dirichlet function on $\R.$

In this final section, we prove the following more general theorem.   

\begin{thm}\label{mainthm1}  Assume the polyconvex function $F$ is defined as above.  Let  $(\bar u, \bar v)\in C^1(\bar \Omega_T;\R^2\times\MM)$ satisfy  
\begin{equation}\label{subs}
\bar u=\dv \bar v, \quad (D\bar u,\partial_t \bar v)\in \Sigma(\bar \lambda) \;\;\mbox{  
 on $\bar \Omega_T$}
 \end{equation}
   for some $0< \bar \lambda<1.$  Then, for each $\rho>0,$  the  problem 
\begin{equation}\label{ibvp-5}
\begin{cases}  \partial_t u=\dv DF(Du )   \;\; \mbox{\rm on $\Omega_T;$}
\\
 u  |_{\partial\Omega_T}=\bar u 
\end{cases}
\end{equation}  possesses a  Lipschitz weak solution   $u$ such that  $\|u-\bar u\|_{L^\infty}+\|\partial_t u-\partial_t \bar u\|_{L^\infty} <\rho$  and that $Du$ is not essentially continuous at any point; in fact,  $\omega_{Du}(y_0)\ge \delta$ for all $y_0\in\Omega_T,$ where is $\delta>0$ is a uniform constant.
  \end{thm}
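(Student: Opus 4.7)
The plan is to construct $u$ as the $L^1$-limit of an iteratively built sequence of piecewise-$C^1$ subsolutions $(u_j,v_j)$ of the relation $(\ref{pdr1})$, produced by repeated application of Theorem $\ref{thm1}$ on refining cube partitions $\mathcal P_j$ of $\Omega_T$. Fix numbers $\max\{\bar\lambda,\nu_1,\delta_1\}<\mu_0<\mu'_0<\mu_1<\mu'_1<\cdots$ with $\mu_j\uparrow 1$, together with a positive summable sequence $\epsilon_j\downarrow 0$ with $\sum_j\epsilon_j<\rho$. Set $(u_0,v_0)=(\bar u,\bar v)$ and $\mathcal P_0:=\{\Omega_T\}$. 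At step $j\ge 1$, apply Theorem $\ref{thm1}$ separately on every $Q\in\mathcal P_{j-1}$ with input parameters $(\lambda,\lambda')=(\mu_{j-2},\mu'_{j-2})$ (setting $\mu_{-1}:=0$, $\mu'_{-1}:=\bar\lambda$), output parameters $(\mu,\mu')=(\mu_{j-1},\mu'_{j-1})$ and tolerance $\epsilon_j$, producing $(u_j,v_j)$ piecewise $C^1$ on a refinement $\mathcal P_j$ of cube radii $<\epsilon_j$ with $(Du_j,\partial_tv_j)\in\Sigma(\mu'_{j-1})$. The required disjointness of $\{S_k(\lambda)\}$ holds by Property (P3)(a) at $j=1$ and by Property (P3)(c) at $j\ge 2$ since $\lambda=\mu_{j-2}\ge\mu_0>\delta_1$.

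\textbf{Convergence and weak equation.} $(Du_j,\partial_tv_j)$ takes values in the uniformly bounded set $\bigcup_{0\le\lambda<1}K(\lambda)$, so $\{Du_j\}$ is uniformly bounded. Property (c) gives $\|u_{j+1}-u_j\|_{L^\infty}+\|\partial_tu_{j+1}-\partial_tu_j\|_{L^\infty}<\epsilon_{j+1}$, the scaling estimates of Corollary $\ref{div-inv-1}$ give $\|v_{j+1}-v_j\|_{L^\infty}\le C\epsilon_{j+1}$, and combining property (d) at step $j$ (so that $|G_0|\le\epsilon_j|\Omega_T|$) with property (f) at step $j+1$, summed over $\mathcal P_j$, yields
\[
\|Du_{j+1}-Du_j\|_{L^1(\Omega_T)}\le C(\epsilon_j+\epsilon_{j+1}+\mu_j-\mu_{j-1})|\Omega_T|,
\]
all summable in $j$. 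Hence $u_j\to u$ in $L^\infty\cap W^{1,1}$ and $v_j\to v$ in $L^\infty$, with $u$ Lipschitz, $u=\dv v$, $u|_{\partial\Omega_T}=\bar u$, and $\|u-\bar u\|_{L^\infty}+\|\partial_tu-\partial_t\bar u\|_{L^\infty}<\rho$. Since $K(\mu)$ lies in a $C(1-\mu)$-neighborhood of $\K$, property (d) gives $\|\partial_tv_{j+1}-DF(Du_{j+1})\|_{L^1}\le C(\epsilon_j+1-\mu_j)|\Omega_T|\to 0$; together with continuity of $DF$ this yields $\partial_tv=DF(Du)$ a.e., and integrating by parts in time in the identity $u_j=\dv v_j$ passes to the weak formulation $(\ref{weak-sol})$ in the limit.

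\textbf{Nowhere essential continuity.} Set $\delta_0:=\tfrac14\min_{i\neq k}\dist(\mathbb P\xi_i(\bar B_\beta),\mathbb P\xi_k(\bar B_\beta))$, positive by Property (P1) and compactness of $\bar B_\beta$. Fix $y_0\in\Omega_T$ and any open neighborhood $U\ni y_0$; for $j$ sufficiently large (so $\epsilon_j$ is much smaller than $\dist(y_0,\partial U)$), at least one cube $Q\in\mathcal P_j$ lies in $U$. The per-Vitali-cube estimate $(\ref{pro-d})$ in the proof of Theorem $\ref{thm1}$ provides, for each $k\in\{1,\ldots,5\}$, a sub-region of $Q$ of measure at least $c_j|Q|$ on which $(Du_j,\partial_tv_j)\in S_k(\mu_{j-1})$, where $c_j:=(1-\epsilon_j)(\mu_{j-1}-\mu'_{j-2})(\mu_{j-1}-\nu_1)^4\nu_0>0$. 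Iterating property (e) on the refining sub-cubes of $Q$ at each subsequent step yields
\[
\bigl|\{Q\cap(Du_{j+m},\partial_tv_{j+m})\in S_k(\mu_{j+m-1})\}\bigr|\ge\frac{\mu_{j-1}}{\mu_{j+m-1}}\,c_j|Q|
\]
for every $m\ge 0$. Using $\mathbb P S_k(\mu)\subset B_{C(1-\mu)}(\mathbb P\xi_k(\bar B_\beta))$ together with the $L^1$-convergence $Du_{j+m}\to Du$, letting $m\to\infty$ yields
\[
|\{y\in Q:Du(y)\in B_{\delta_0}(\mathbb P\xi_k(\bar B_\beta))\}|\ge\mu_{j-1}c_j|Q|>0
\]
for each $k$. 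As the five open $\delta_0$-neighborhoods $B_{\delta_0}(\mathbb P\xi_k(\bar B_\beta))$ are pairwise at distance $\ge 2\delta_0$, $Du$ takes values in two pairwise-separated sets on positive-measure subsets of $Q\subset U$; this forces $\omega_{Du}(y_0)\ge 2\delta_0=:\delta$.

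\textbf{The decisive obstacle} is the final step: propagating the per-cube five-well measure lower bound through every subsequent refinement and into the $L^1$-limit. This succeeds only because Theorem $\ref{thm1}$(e) is formulated \emph{multiplicatively}---so that iterating the lower bound on the measure of $S_k$ inside a fixed cube yields a factor $\mu_{j-1}/\mu_{j+m-1}\to\mu_{j-1}>0$ as $m\to\infty$, rather than a collapsing one---and because the compact sets $\mathbb P\xi_k(\bar B_\beta)$, $k=1,\ldots,5$, are pairwise separated by a positive universal constant.
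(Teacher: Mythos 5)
Your proposal is correct and follows essentially the same approach as the paper's proof: iterate Theorem \ref{thm1} over a refining sequence of cube partitions with parameters $\lambda_n \uparrow 1$, establish Cauchy convergence in $L^\infty$ and $W^{1,1}$ via properties (c), (d), (f), pass to the weak formulation using the smallness of $DF(Du_n)-\partial_t v_n$ on $K(\lambda_n)$, and exploit the multiplicative lower bound in property (e) to propagate positive measure in each of the five separated wells through the limit (the paper isolates this iteration as Lemma \ref{lem512}, which you reproduce inline). The only differences are cosmetic: your $\delta_0$-neighborhood formulation of the limit passage (which should really use closed balls to invoke upper semicontinuity of the indicator under a.e.\ convergence) and a factor-of-two change in the final constant $\delta$.
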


\begin{remk} (i) This result is a type of  {\em $h$-principle} for the gradient flow (\ref{GF}) in the sense of {\sc Gromov} \cite{Gr86}.

(ii) The theorem asserts that the weak solutions exist in every $L^\infty$ neighborhood of $\bar u;$ therefore, such solutions must be infinitely many.  

(iii)   By a {\em global}  weak solution  of (\ref{GF}) we mean a Sobolev function $u$ on $\Omega_\infty=\Omega\times (0,\infty)$ such that  (\ref{weak-sol}) holds for all $\varphi\in C^1([0,\infty); C^\infty_c(\Omega;\R^m))$ and all $T>0.$  

(iv)  Since the boundary condition in Theorem \ref{mainthm1} is  a full Dirichlet one on the whole boundary  $\partial\Omega_T,$  by glueing solutions on finite time steps, we have the following {\em global existence} result.
\end{remk}

\begin{thm}
Let $(\bar u,\bar v)$ be a $C^1$ strict subsolution of (\ref{pdr1}) on $\bar \Omega_\infty.$ Then,   for each $\rho>0,$ there exists a Lipschitz global  weak solution  $u$ of the gradient flow (\ref{GF}) on $\Omega_\infty$  such that $\|u-\bar u\|_{L^\infty}+\|\partial_t u-\partial_t \bar u\|_{L^\infty} <\rho$, $u|_{\partial'\Omega_\infty}=\bar u$ and  $Du$ is nowhere essentially continuous on $\Omega_\infty.$
\end{thm}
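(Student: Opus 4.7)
The plan is to invoke Theorem~\ref{mainthm1} on each time slab $\Omega\times(k,k+1)$, $k=0,1,2,\ldots$, and paste the resulting weak solutions along the interfaces $\Omega\times\{k\}$. The gluing succeeds because the Dirichlet condition in Theorem~\ref{mainthm1} is imposed on the full topological boundary $\partial\Omega_T$, in particular on the top face $\Omega\times\{T\}$, which provides automatic matching with the next slab.

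Fix $\rho>0$. For each $k\ge 0$ the restriction of $(\bar u,\bar v)$ to $\overline{\Omega\times(k,k+1)}$ is still a $C^1$ strict subsolution of (\ref{pdr1}) with parameter $\bar\lambda$. Applying Theorem~\ref{mainthm1} on this slab with tolerance $\rho$ yields a Lipschitz weak solution $u_k$ of (\ref{GF}) on $\Omega\times(k,k+1)$ satisfying $u_k=\bar u$ on $\partial(\Omega\times(k,k+1))$, $\|u_k-\bar u\|_{L^\infty}+\|\partial_t u_k-\partial_t\bar u\|_{L^\infty}<\rho$, and $\omega_{Du_k}(y)\ge\delta$ for every $y\in\Omega\times(k,k+1)$. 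I then define $u\colon\Omega_\infty\to\R^2$ piecewise by $u(x,t)=u_k(x,t)$ for $t\in(k,k+1)$ and $u(x,k)=\bar u(x,k)$; since $u_{k-1}(x,k)=u_k(x,k)=\bar u(x,k)$, the resulting function is continuous across each interface and locally Lipschitz on $\Omega_\infty$.

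To verify (\ref{weak-sol}) globally, I would fix $T>0$ and $\varphi\in C^1([0,T];C_c^\infty(\Omega;\R^2))$, split the space-time integral into the slabs $(k,k+1)\cap(0,T)$, and apply the slab-wise weak identity from Theorem~\ref{mainthm1} on each piece. The interior endpoint contributions at integer times $t=k$ cancel in pairs because $u_{k-1}(\cdot,k)=u_k(\cdot,k)$, leaving only the $t=0$ and $t=T$ terms on the left, which reproduces (\ref{weak-sol}) on $\Omega_T$. Note that only finitely many slabs contribute for each finite $T$, so no convergence issues arise.

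The remaining properties are immediate consequences of their slab-wise versions. The $L^\infty$ bound passes to the global supremum; the identity $u|_{\partial'\Omega_\infty}=\bar u$ holds since each $u_k$ equals $\bar u$ on $\partial\Omega\times[k,k+1]$ and at $t=0$; and for the nowhere-essential-continuity, given any $y_0\in\Omega_\infty$ and any open neighborhood $U\subset\Omega_\infty$ of $y_0$, one can choose an open subset $U'\subset U$ lying entirely inside some open slab $\Omega\times(k,k+1)$, so that for any $y'\in U'$,
\[
\|Du(x)-Du(y)\|_{L^\infty(U\times U)}\ge \|Du_k(x)-Du_k(y)\|_{L^\infty(U'\times U')}\ge \omega_{Du_k}(y')\ge \delta,
\]
and taking the infimum over $U$ gives $\omega_{Du}(y_0)\ge\delta$. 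The only delicate point, which I do not expect to be a genuine obstacle, is the telescoping step in the weak formulation, which relies crucially on the fact that Theorem~\ref{mainthm1} prescribes the Dirichlet data on all of $\partial\Omega_T$ rather than merely on the parabolic boundary; without this full-boundary matching, the interior traces at $t=k$ would generally fail to cancel.
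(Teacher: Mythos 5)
Your argument matches the paper's intended route exactly: Remark (iv) preceding the theorem points out that the full Dirichlet condition on $\partial\Omega_T$ (rather than merely the parabolic boundary) is what makes slab-by-slab gluing possible, and you correctly use this to match traces at each interface $\Omega\times\{k\}$ and telescope the boundary terms in the weak formulation; the restriction of the slab-wise weak identity to a partial slab $(\lfloor T\rfloor,T)$ and the nowhere-essential-continuity argument via a sub-neighborhood $U'$ lying in a single open slab are both handled correctly. One small slip: applying Theorem~\ref{mainthm1} with tolerance $\rho$ on each slab gives $\|u_k-\bar u\|_{L^\infty}+\|\partial_t u_k-\partial_t\bar u\|_{L^\infty}<\rho$ per slab, but the global quantity is $\sup_k\|u_k-\bar u\|_{L^\infty}+\sup_k\|\partial_t u_k-\partial_t\bar u\|_{L^\infty}$, which need not be bounded by $\rho$ even if each slab-wise sum is (the two suprema may be attained on different slabs); this is remedied at no cost by invoking Theorem~\ref{mainthm1} on each slab with tolerance $\rho/3$, say.
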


\begin{proof}[Proof of Theorem \ref{mainthm}] Before proving Theorem \ref{mainthm1}, we show that it implies our main theorem as a  corollary. For example,  let $\phi\in C^1_0(\Omega;\R^2)$ be given as in Theorem \ref{mainthm}.  Let $J\subset \R^2$ be a rectangle containing $\Omega$ and extend $\phi$ to $J$ by zero. Let $h=\mathcal R^J \phi\in C^1(\bar J;\MM)$  and define
 \begin{equation}\label{sub}
 \bar u(x,t)= \frac{\epsilon}{T} \phi(x) t,\quad \bar v(x,t)=\frac{\epsilon}{T} h(x) t.
 \end{equation}
Then, $(\bar u,\bar v)\in C^1(\bar\Omega_T;\R^2\times\MM)$, $\bar u=\dv \bar  v$ on $\bar\Omega_T$, $\bar u|_{\partial'\Omega_T}=0$  and 
\[
\|(D\bar u,\partial_t  \bar v)-(0,0)\|_{L^\infty}  \le  |\epsilon| (\|D\phi\|_{L^\infty}+\frac{1}{T}\|h\|_{L^\infty}).
\]
Since $
(0,0)=\pi_1(0)\in \pi_1(B_\beta)=S_1(0)$, which  is  open in $\MM\times \MM,$  we select  a number $\epsilon_0>0$ depending only on $\phi$  such that  \begin{equation}\label{suff-0}
 (D\bar  u, \, \partial_t \bar  v) \in S_1(0)\subset \Sigma(\bar \lambda) \mbox{ \; on $\,\bar\Omega_T$}\quad \forall\; |\epsilon|<\epsilon_0,
\end{equation} 
where $0<\bar \lambda<1$ is any fixed number. Then, for all $|\epsilon|<\epsilon_0$,  with the strict subsolution $(\bar u,\bar v)$  defined by (\ref{sub}), Theorem \ref{mainthm} follows from Theorem \ref{mainthm1}.
\end{proof}

 \begin{proof}[Proof of Theorem \ref{mainthm1}]   First we select sequences $\{\lambda_n\}$, $\{\lambda'_n\}$ and $\{\epsilon_n\}$  such that
\begin{equation} 
 \begin{cases} \lambda_0=0, \quad  \lambda_1> \max\{\bar\lambda,\, \delta_1,\,  \nu_1\},\\
 \lambda_{n+1}>\lambda_{n}, \quad \lim_{n\to\infty}\lambda_n=1;\\
\lambda_0'=\bar\lambda,\quad 
 \lambda_n'= \frac12(\lambda_{n}+\lambda_{n+1}) \quad \forall\, n=1,2,\cdots; \\
   \epsilon_n=\rho/3^n. \end{cases}
   \end{equation}
   
Since $\{S_i(0)\}_{i=1}^5$ is a family of disjoint open sets, we can apply Theorem \ref{thm1} to  $(u,v)=(\bar u,\bar v)$ on the set $G=\Omega_T$ with
\[
 \lambda=\lambda_0,\; \;  \lambda'=\lambda_0',\; \; \mu=\lambda_1,\;\; \mu'=\lambda_1',\;\;\epsilon=\epsilon_1
 \]
 to obtain a function $(u_1,v_1)= (\tilde u,\tilde v)\in W^{1,\infty}_{(\bar u,\bar v)}(\Omega_T;\R^2\times \MM)$ that is piece-wise $C^1$ on at most  countable partition of $\Omega_T$ by disjoint closed cubes $\{\bar Q^1_j\}_{j=1}^{M_1}$ ($M_1\le \infty$)  such that
\begin{equation}\label{u1}
\begin{cases}
 0<\rad(Q^1_j)<\epsilon_1;\\ 
 u_1=\dv v_1, \; (D u_1,  \partial_t v_1)\in \Sigma(\lambda_1')  \mbox{ on each $\bar Q^1_j$;}\\ 
 \|u_1-\bar u\|_{L^\infty(\Omega_T)}+ \|\partial_t u_1-\partial_t  \bar u\|_{L^\infty(\Omega_T)}<\epsilon_1;\\
 |Q^1_j\cap \{(D u_1,\partial_t v_1) \in K(\lambda_1) \}| >(1-\epsilon_1)\,|Q^1_j| \quad \forall\,1\le j\le M_1.
 \end{cases}
 \end{equation}
 
 We now construct  $(u_n,v_n)$ for all $n\ge 2$ by induction. Suppose that for a fixed $n\ge 1$ we have constructed $(u_n,v_n) \in W^{1,\infty}_{(\bar u,\bar v)}(\Omega_T;\R^2\times \MM)$ that is piece-wise $C^1$ on at most  countable partition of $\Omega_T$ by disjoint closed cubes $\{\bar Q^n_j\}_{j=1}^{M_n}$ ($M_n\le \infty$).

 Since $\lambda_n>\delta_1$, by Property (P3)(c), $\{S_i(\lambda_n)\}_{i=1}^5$ is a family of disjoint  open sets, and thus we can  apply Theorem \ref{thm1} to  $(u,v)=(u_n,v_n)$ on the set  $G= Q_j^n$ for each $1\le j\le M_n$ with
\[
 \lambda=\lambda_n,\;\; \lambda'=\lambda_{n}',\;\; \mu=\lambda_{n+1},\;\; \mu'=\lambda_{n+1}',\;\;\epsilon=\epsilon_{n+1},
 \]
 to obtain a function  $(\tilde u^{n,j},\tilde v^{n,j})\in W^{1,\infty}_{(u_n,v_n)}(Q^n_j;\R^2\times \MM)$ that is piece-wise $C^1$ on at most  countable partition of $Q^n_j$ by disjoint closed cubes $\{\bar Q^{n,j}_i\}_{i=1}^{M_{n,j}}$ ($M_{n,j}\le \infty$)  such that 
\begin{equation}\label{eq55}
\begin{cases}
 0<\rad(Q_i^{n,j})<\epsilon_{n+1} \quad \forall \, i; \\
 \tilde u^{n,j}=\dv \tilde v^{n,j}, \; (D\tilde u^{n,j},  \partial_t \tilde v^{n,j})\in \Sigma(\lambda_{n+1}')    \mbox{ on  $\bar Q_i^{n,j}$} \quad \forall \,i;  \\ 
 \|\tilde u^{n,j}-u_n\|_{L^\infty(Q^n_j)}+ \|\partial_t \tilde u^{n,j} -\partial_t  u_n\|_{L^\infty(Q^n_j)}<\epsilon_{n+1}; \\  
 |Q_i^{n,j}\cap  \{(D\tilde u^{n,j}, \partial_t \tilde v^{n,j}) \in K(\lambda_{n+1}) \}| >(1-\epsilon_{n+1})\,|Q_i^{n,j}| \quad\forall \, i;\\
 \|D\tilde u^{n,j}-Du_n\|_{L^1(Q^n_j)}\le C\big[\epsilon_n+\epsilon_{n+1}+ (\lambda_{n+1}- \lambda_n)\big ]|Q^n_j|;\\
 |Q^n_j\cap  \{( D\tilde u^{n,j} , \partial_t \tilde v^{n,j})  \in S_k(\lambda_{n+1}) \}|  \\  
 \;\; >\frac14  (\lambda_{n+1}  - \lambda_n) (\lambda_{n+1}-\nu_1)^4 \nu_0\,|Q^n_j| \quad  \forall\,  1\le k\le 5; \\
 |Q^n_j\cap\{( D\tilde u^{n,j} ,\partial_t \tilde v^{n,j})  \in S_k(\lambda_{n+1}) \}|  \\  
 \;\; >\frac{\lambda_n}{\lambda_{n+1}}  |Q^n_j\cap\{(D u_n,\partial_t  v_n) \in S_k(\lambda_n) \}| \quad \forall\,  1\le k\le 5.  
 \end{cases}
 \end{equation}
 Here,   the fifth line of (\ref{eq55})  follows from  the part (f) of Theorem \ref{thm1} with the set $G_0=Q_j^n\cap\{(Du_n,\partial_t v_n)\notin K(\lambda_n)\},$ which  satisfies  $|G_0|  <  \epsilon_n  |Q_j^n|$ by the induction assumption:
 \[
 |Q_j^n\cap\{(Du_n,\partial_t v_n)\in K(\lambda_n)\} |>(1-\epsilon_n)|Q^n_j| 
 \]
 (see the last line of (\ref{u1}) for $n=1$).  

We glue all cubes  $\{Q_j^n\}_{j=1}^{M_n}$ together to have a partition of $\Omega_T$ with
 \[
 \Omega_T=(\bigcup_{j=1}^{M_n}\bar Q_j^n)\cup N_n= (\bigcup_{j=1}^{M_n} Q_j^n)\cup \tilde N_n,\quad \tilde N_n=(\bigcup_{j=1}^{M_n} \partial Q_j^n)\cup N_n,
 \]
where $|N_n|=|\tilde N_n|=0.$ We then  define a  function $(u_{n+1},v_{n+1})\in W^{1,\infty}(\Omega_T;\R^2\times \MM)$ by setting
 \begin{equation}\label{next-fun}
 (u_{n+1},v_{n+1})=\sum_{j=1}^{M_n} (\tilde u^{n,j},\tilde v^{n,j})\chi_{Q_j^n} +(u_n,v_n)\chi_{\tilde N_n}.
 \end{equation}
 The family of  closed cubes $\{\bar Q^{n+1}_h\}_{h=1}^{M_{n+1}}$ is defined simply by re-indexing:
 \[
 \{\bar Q^{n,j}_i\,|\; 1\le j\le M_n,\; 1\le i\le M_{n,j}\} =\{\bar Q^{n+1}_h \,|\; 1\le h \le M_{n+1}\}.
 \]
Then it is easily seen that 
\[
(u_{n+1},v_{n+1})\in W^{1,\infty}_{(\bar u,\bar v)}(\Omega_T;\R^2\times \MM),
\]
  is piece-wise $C^1$ on at most  countable partition of $\Omega_T$ by disjoint closed cubes $\{\bar Q^{n+1}_h\}_{h=1}^{M_{n+1}}$ ($M_{n+1}\le \infty$),  and satisfies  the following properties: 
\begin{eqnarray}
&&0<\rad(Q^{n+1}_h)<\epsilon_{n+1} \;\; \forall \, h; \\
&&u_{n+1}=\dv v_{n+1}, \; (Du_{n+1},  \partial_t v_{n+1})\in \Sigma(\lambda_{n+1}')    \mbox{ on each $\bar Q^{n+1}_h;$} \label{eq57}\\ 
&&\|u_{n+1}-u_n\|_{L^\infty(\Omega_T)}+ \|\partial_t u_{n+1} -\partial_t  u_n\|_{L^\infty(\Omega_T)}<\epsilon_{n+1}; \label{cauchy}\\  
&&  |\{(Du_{n+1}, \partial_t v_{n+1}) \in K(\lambda_{n+1}) \} | >(1-\epsilon_{n+1})\,|\Omega_T|; \label{eq59}\\
&&\|Du_{n+1}-Du_n\|_{L^1(\Omega_T)}\le C\big[\epsilon_n+\epsilon_{n+1}+ (\lambda_{n+1}- \lambda_n)\big ]|\Omega_T|;\label{cauchy1}\\
&&|Q^n_j\cap  \{( Du_{n+1}, \partial_t v_{n+1})  \in S_k(\lambda_{n+1}) \}|  \label{eq510} \\
&&> \frac14  (\lambda_{n+1}  - \lambda_n) (\lambda_{n+1}-\nu_1)^4\nu_0\,|Q^n_j| \;  \forall\, 1\le j \le M_n, \;1\le k\le 5; \nonumber\\
 &&|Q^n_j\cap\{( Du_{n+1},\partial_t v_{n+1})  \in S_k(\lambda_{n+1}) \}|  \label{eq511} \\
&&>\frac{\lambda_n}{\lambda_{n+1}} |Q^n_j\cap\{(D u_n,\partial_t  v_n) \in S_k(\lambda_n) \}| \;  \forall\, 1\le j \le M_n, \;1\le k\le 5. \nonumber
 \end{eqnarray}
 
\begin{lem}\label{lem512} For  all $n>m\ge 1$ and $1\le j\le M_m$,  we have
 \[
 |Q^m_j  \cap\{( Du_{n+1} ,\partial_t v_{n+1})  \in S_k(\lambda_{n+1}) \}| \]
 \[
 >\frac14 \frac{\lambda_{m+1}}{\lambda_{n+1}}  (\lambda_{m+1}- \lambda_m)(\lambda_{m+1}-\nu_1)^4\nu_0\,|Q^m_j|,
  \]
  for all $1\le k\le 5.$
 \end{lem}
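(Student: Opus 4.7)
The plan is to prove the lemma by induction on $n\ge m$, using the telescoping structure built into the two key per-step estimates (\ref{eq510}) and (\ref{eq511}) from the construction. The key structural fact I will exploit is that the cube families $\{\bar Q^n_j\}$ form a nested refinement: by the definition (\ref{next-fun}) and the re-indexing $\{\bar Q^{n+1}_h\}=\{\bar Q^{n,j}_i\}$, every level-$(n+1)$ cube is contained in a unique level-$n$ cube, so inductively, for every $n\ge m$ the subfamily $\{Q^n_h:\bar Q^n_h\subset Q^m_j\}$ partitions $Q^m_j$ up to measure zero. This refinement is what allows me to sum the local bounds over subcubes contained in $Q^m_j$.

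I will take the base case to be $n=m$, where the claim reduces to (\ref{eq510}) applied at level $m$:
\[
a_m:=|Q^m_j\cap\{(Du_{m+1},\partial_t v_{m+1})\in S_k(\lambda_{m+1})\}|>\tfrac14(\lambda_{m+1}-\lambda_m)(\lambda_{m+1}-\nu_1)^4\nu_0\,|Q^m_j|,
\]
since at $n=m$ the factor $\lambda_{m+1}/\lambda_{n+1}$ equals $1$. For the inductive step, I assume the bound holds at level $n-1$ for some $n\ge m+1$. On each subcube $Q^n_h$ with $\bar Q^n_h\subset Q^m_j$ I apply (\ref{eq511}) (which is stated precisely on each level-$n$ cube) to obtain
\[
|Q^n_h\cap\{(Du_{n+1},\partial_t v_{n+1})\in S_k(\lambda_{n+1})\}|>\tfrac{\lambda_n}{\lambda_{n+1}}|Q^n_h\cap\{(Du_n,\partial_t v_n)\in S_k(\lambda_n)\}|,
\]
and then sum over all such $Q^n_h$. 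By the refinement property, the right-hand sides assemble into $\tfrac{\lambda_n}{\lambda_{n+1}}a_{n-1}$, yielding $a_n>\tfrac{\lambda_n}{\lambda_{n+1}}a_{n-1}$. Substituting the inductive hypothesis and using the simplification $\tfrac{\lambda_n}{\lambda_{n+1}}\cdot\tfrac{\lambda_{m+1}}{\lambda_n}=\tfrac{\lambda_{m+1}}{\lambda_{n+1}}$ gives exactly the claim at level $n$; iterated across $m+1,\ldots,n$ this is the promised telescoping product.

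I do not expect a genuine obstacle here: the heavy lifting has already been done in Theorem \ref{thm1}, which produces both the \emph{fresh} lower bound (\ref{eq510}) at each refinement step and the \emph{persistence} bound (\ref{eq511}) showing that a definite fraction $\lambda_n/\lambda_{n+1}$ of the previously-captured $S_k(\lambda_n)$-mass is preserved (and shifted into $S_k(\lambda_{n+1})$) at the next step. The lemma itself is a bookkeeping argument that assembles these per-step estimates into the stated telescoping lower bound; the only point requiring a moment of care is verifying that the level-$n$ cubes inside $Q^m_j$ really do partition $Q^m_j$ up to null sets, which follows directly from the construction in (\ref{next-fun}).
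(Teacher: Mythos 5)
Your proof is correct and follows essentially the same approach as the paper: apply the persistence estimate (\ref{eq511}) iteratively from level $n$ down to $m+1$, telescope the ratios $\lambda_{n'}/\lambda_{n'+1}$, and finish with the fresh-mass bound (\ref{eq510}) at level $m$. The paper compresses this into a single chain of inequalities after stating the nested-partition fact (each $Q^m_j$ is a union of level-$n$ cubes up to a null set) once at the start, whereas you recast it as a formal induction and make the sum-over-subcubes step explicit, but the mathematical content is identical.
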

\begin{proof} By the construction of sequence $\{(u_{n},v_{n})\}_{n=1}^\infty$ and the cubes $\{Q^{n}_j\}$ above, we  see that  if $n>m\ge 1$ and $1\le j\le M_m$ then $Q^m_j=(\cup_{k\in I} Q^n_k)\cup N$ for some index set $I$ and  a null-set $N.$ 
Thus,  for all $n>m\ge 1$, $1\le j\le M_m$ and $1\le k\le 5,$   by (\ref{eq510}) and  (\ref{eq511}) we have  
 \[
 |Q^m_j  \cap\{( Du_{n+1} ,\partial_t v_{n+1})  \in S_k(\lambda_{n+1}) \}| \]
 \[
 >\frac{\lambda_n}{\lambda_{n+1}} |Q^m_j\cap\{(Du_n,\partial_t  v_n) \in S_k(\lambda_n) \}|\]
 \[
 >\frac{\lambda_n}{\lambda_{n+1}} \cdot \frac{\lambda_{n-1}}{\lambda_{n}}|Q^m_j\cap\{(Du_{n-1},\partial_t  v_{n-1}) \in S_k(\lambda_{m+1}) \}| \]
 \[
 >\frac{\lambda_n}{\lambda_{n+1}}\cdot \frac{\lambda_{n-1}}{\lambda_{n}} \cdot \cdots \cdot \frac{\lambda_{m+1}}{\lambda_{m+2}}|Q^m_j\cap\{(Du_{m+1},\partial_t  v_{m+1}) \in S_k(\lambda_{m+1}) \}| \]
 \[
 =\frac{\lambda_{m+1}}{\lambda_{n+1}} |Q^m_j\cap\{(Du_{m+1},\partial_t  v_{m+1}) \in S_k(\lambda_{m+1}) \}| \]
 \[
 >\frac14 \frac{\lambda_{m+1}}{\lambda_{n+1}}  (\lambda_{m+1}- \lambda_m)(\lambda_{m+1}-\nu_1)^4\nu_0\,|Q^m_j|. 
 \]
 \end{proof}

\begin{lem} \label{lem52} Let $f(X)= DF(A)-B $ for $X=(A,B)\in \MM\times \MM.$
   Then, there is a constant $C>0$ such that 
   \[
  | f(X)|\le C\,(1-\lambda) \quad \forall\; 0\le \lambda\le 1,\; X\in K(\lambda).
   \]  
   \end{lem}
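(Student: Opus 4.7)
The plan is to exploit the explicit parametrization of $K(\lambda)$ via the functions $\xi_i$ and $\pi_i$ together with the fact that $\xi_i(q)$ lies on the graph $\K$ of $DF$ by property (P1). Since $\bar B_\beta$ is compact and $\xi_i,\pi_i$ are continuous, everything is uniformly bounded, and the smoothness of $F$ gives a uniform Lipschitz constant for $DF$ on the relevant range.

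Concretely, first I would unpack the definition: if $X=(A,B)\in K(\lambda)$, then $X\in S_i(\lambda)$ for some $1\le i\le 5$, so there exists $q\in\bar B_\beta$ with $X=\lambda\xi_i(q)+(1-\lambda)\pi_i(q)$. Write $\xi_i(q)=(P_i,Q_i)$ and $\pi_i(q)=(R_i,S_i)$ with $P_i=\mathbb P\xi_i(q)$ and $R_i=\mathbb P\pi_i(q)$. Since $\xi_i(q)\in\K$, property (P1) forces $Q_i=DF(P_i)$. Reading off components gives
\[
A=\lambda P_i+(1-\lambda)R_i=P_i+(1-\lambda)(R_i-P_i),\qquad B=\lambda DF(P_i)+(1-\lambda)S_i.
\]

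Next I would compute $f(X)=DF(A)-B$ directly:
\[
f(X)=\bigl[DF\bigl(P_i+(1-\lambda)(R_i-P_i)\bigr)-DF(P_i)\bigr]+(1-\lambda)\bigl(DF(P_i)-S_i\bigr).
\]
The second term is manifestly $O(1-\lambda)$ because $P_i$ and $S_i$ lie in the compact set $\mathbb P\xi_i(\bar B_\beta)\cup\mathbb P\pi_i(\bar B_\beta)$ and $DF$ is continuous. For the first bracket, since $F\in C^1$ (in fact $F$ is smooth by construction in \cite{Ya1}) and $DF$ is therefore Lipschitz on the compact set containing $A$ and $P_i$ (both lying in the bounded set $\xi_i(\bar B_\beta)\cup\pi_i(\bar B_\beta)$ projected to $\MM$), one gets
\[
\bigl|DF\bigl(P_i+(1-\lambda)(R_i-P_i)\bigr)-DF(P_i)\bigr|\le L\,(1-\lambda)\,|R_i-P_i|,
\]
where $L$ is the Lipschitz constant of $DF$ on a fixed bounded set enclosing the ranges of all $\mathbb P\xi_i$ and $\mathbb P\pi_i$.

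Taking the supremum over $q\in\bar B_\beta$ and $1\le i\le 5$, the quantities $|R_i-P_i|$ and $|DF(P_i)-S_i|$ are bounded by constants depending only on $F$, and combining the two estimates yields $|f(X)|\le C(1-\lambda)$ with $C$ depending only on $F$. I do not expect any real obstacle here: the argument is essentially a one-line Taylor estimate once the parametrization is written down. The only thing worth noting is that at $\lambda=1$ we recover $f=0$, consistent with $K(1)\subset\K$, which both serves as a sanity check and explains why the factor $(1-\lambda)$ must appear.
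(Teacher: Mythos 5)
Your proof is correct and uses essentially the same idea as the paper: parametrize $X\in K(\lambda)$ as $\lambda\xi_i(q)+(1-\lambda)\pi_i(q)$, use that $\xi_i(q)$ lies on the graph $\K$, and invoke Lipschitz continuity of $DF$ on a fixed compact set. The paper's version is marginally slicker — it observes that $f$ itself is Lipschitz on $\overline{\Sigma(1)}$ with $f(\xi_i(q))=0$, so $|f(X)|=|f(X)-f(\xi_i(q))|\le L|X-\xi_i(q)|=L(1-\lambda)|\xi_i(q)-\pi_i(q)|$ in one step, avoiding your component-wise two-term decomposition, but the content is the same.
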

   \begin{proof}
   Since $F$ is smooth, $f$ is locally Lipschitz and thus $|f(X_1)-f(X_2)|\le  L|X_1-X_2|$ for all $X_1,X_2\in \overline{\Sigma(1)}$, where  $L>0$ is a constant. Let $X\in K(\lambda)$. Then $X=\lambda\xi_i(q)+(1-\lambda)\pi_i(q)$ for some $1\le i\le 5$ and $q\in B_\beta.$ Note that $\xi_i(q)\in \K$ and thus $f(\xi_i(q))=0$; hence $|f(X)|\le L|X-\xi_i(q)|=L(1-\lambda)|\xi_i(q)-\pi_i(q)|\le C(1-\lambda).$
   \end{proof}
    
    We are now in a position to complete the proof. 
    
    By (\ref{cauchy}) and (\ref{cauchy1}), $\{u_n\}$ is a Cauchy sequence in $W^{1,1}(\Omega_T;\R^2).$ Let $u_n\to u$ in $W^{1,1}(\Omega_T;\R^2)$ as $n\to\infty.$ Clearly, $u$ also belongs to $W^{1,\infty}_{\bar u}(\Omega_T;\R^2).$ Moreover, by (\ref{cauchy}), we have
 \[
 \begin{split}
 \|u  -\bar u\|_{L^\infty(\Omega_T)}  + &\| \partial_t u -\partial_t  \bar u\|_{L^\infty(\Omega_T)}\\
  \le \sum_{n=0}^\infty [\|u_{n+1}-u_n\|_{L^\infty(\Omega_T)}  &+ \|\partial_t u_{n+1} -\partial_t  u_n\|_{L^\infty(\Omega_T)}] \\
    \le \sum_{n=0}^\infty {\epsilon_{n+1}} &=\frac12 \rho <\rho. \end{split}
\]

By (\ref{eq59})  and Lemma \ref{lem52},  we have
\[
 \int_{\Omega_T} |f(Du_{n+1}, \partial_t v_{n+1})|   \le  C[(1-\lambda_{n+1})   +\epsilon_{n+1} ]|\Omega_T|;
 \]
thus $\|f(Du_{n+1}, \partial_t v_{n+1})\|_{L^1(\Omega_T)} \to 0$   as $n\to \infty.$ 
 From  $u_{n+1}=\dv v_{n+1}$ a.e.\,on $\Omega_T$,  it follows easily that 
  \begin{equation}\label{weak-id}
\begin{split}  &\int_\Omega    (u_{n+1}  \cdot  \varphi)  \big |_{t=0}^{t=T}    - \int_{\Omega_T} \big (u_{n+1} \cdot \partial_t \varphi  - DF(Du_{n+1}) : D\varphi \big )\\  = \int_{\Omega_T} &\big (DF(Du_{n+1})-\partial_t v_{n+1}\big ):D\varphi \quad \forall\, \varphi\in C^1([0,T];C^\infty_c(\Omega;\R^2)). \end{split}
 \end{equation}
Since $u_{n+1}\to u$ in $W^{1,1}(\Omega_T)$, $f(Du_{n+1}, \partial_t v_{n+1}) \to 0$ in $L^1(\Omega_T)$ both strongly as $n\to \infty$ and $\{\|Du_{n+1}\|_{L^\infty(\Omega_T)}\}$ is bounded, taking the limits in (\ref{weak-id}), we have that
 \[
 \int_\Omega  (u  \cdot \varphi)   \big |_{t=0}^{t=T}  - \int_{\Omega_T} \big (u  \cdot \partial_t \varphi  - DF(Du) : D\varphi \big ) =0 
\]
holds for all $\varphi\in C^1([0,T];C^\infty_c(\Omega;\R^2)).$ This proves that $u$ is a weak solution of (\ref{ibvp-5}).

 Finally, to show $Du$ is nowhere essentially continuous on $\Omega_T,$ let $y_0\in \Omega_T$ and $U$ be any open subset of $\Omega_T$ containing $y_0.$  From \[
  U\cap(\cap_{n=1}^\infty \cup_{j=1}^{M_n} \bar Q^n_j) \ne \emptyset 
  \]
 and   $\rad(Q^n_j)<\epsilon_n\to 0$, it follows  that there exist some $m\ge 1$ and $1\le j\le M_m$ such that $\bar Q^m_j\subset U.$ 
 
 Recall  the projection operator: $\mathbb P(A,B)=A.$ By Lemma \ref{lem512},   we have for all $n>m$ and $1\le k\le 5$,
\[
 |Q^m_j\cap \{Du_{n+1}  \in \mathbb P(S_k(\lambda_{n+1}))\}| \]
 \[
  \ge  |Q^m_j  \cap\{( Du_{n+1} ,\partial_t v_{n+1})  \in S_k(\lambda_{n+1}) \}| \]
  \[
 >\frac14 \frac{\lambda_{m+1}}{\lambda_{n+1}}  (\lambda_{m+1}- \lambda_m) (\lambda_{m+1}-\nu_1)^4\nu_0\,|Q^m_j|
 \] 
 Taking the limits as $n\to \infty$, since $u_{n+1}\to u$ in $W^{1,1}(\Omega_T;\R^2)$, we have
\[
 |Q^m_j\cap \{Du \in \overline{\mathbb P(S_k(1))}\}  \ge\frac14\lambda_{m+1}(\lambda_{m+1}-\lambda_m)(\lambda_{m+1}-\nu_1)^4\nu_0\, |Q^m_j|>0 
\]
for all $1\le k\le 5.$  Let
\[
\delta=\min_{k\ne l}\dist(\overline{\mathbb P(S_k(1))};\overline{\mathbb P(S_l(1))}).
\]
By Property (P1), we have $\delta>0$. Thus $\|Du(y)-Du(z)\|_{L^\infty(U\times U)}\ge \delta>0$ for all open  sets $U\subset\Omega_T$ containing $y_0$; this proves  \[
\omega_{Du}(y_0)\ge \delta>0.
\] 

The proof of Theorem \ref{mainthm1} is now completed.
  \end{proof}
  
  \section*{Data Availability Statement}
Data sharing not applicable to this article as no datasets were generated or analyzed during the current study.

\end{document}